\def\@settitle{%
  \vspace*{-20pt}
  \begin{flushleft}%
    \baselineskip14\p@\relax
    \normalfont\bfseries\LARGE
    \@title
  \end{flushleft}%
}
\def\@setauthors{%
  \begingroup
  \def\thanks{\protect\thanks@warning}%
  \trivlist
  \large \@topsep30\p@\relax
  \advance\@topsep by -\baselineskip
  \item\relax
  \author@andify\authors
  \def\\{\protect\linebreak}%
  \authors
  \ifx\@empty\contribs
  \else
    ,\penalty-3 \space \@setcontribs
    \@closetoccontribs
  \fi
  \normalfont
  \endtrivlist
  \endgroup
}
\def\@setaddresses{\par
  \nobreak \begingroup\raggedright
  \small
  \def\author##1{\nobreak\addvspace\smallskipamount}%
  \def\\{\unskip, \ignorespaces}%
  \interlinepenalty\@M
  \def\address##1##2{\begingroup
    \par\addvspace\bigskipamount\noindent
    \@ifnotempty{##1}{(\ignorespaces##1\unskip) }%
    {\ignorespaces##2}\par\endgroup}%
  \def\curraddr##1##2{\begingroup
    \@ifnotempty{##2}{\nobreak\noindent\curraddrname
      \@ifnotempty{##1}{, \ignorespaces##1\unskip}\/:\space
      ##2\par}\endgroup}%
  \def\email##1##2{\begingroup
    \@ifnotempty{##2}{\smallskip\nobreak\noindent E-mail address%
      \@ifnotempty{##1}{, \ignorespaces##1\unskip}\/:\space
      \ttfamily##2\par}\endgroup}%
  \def\urladdr##1##2{\begingroup
    \def~{\char`\~}%
    \@ifnotempty{##2}{\nobreak\noindent\urladdrname
      \@ifnotempty{##1}{, \ignorespaces##1\unskip}\/:\space
      \ttfamily##2\par}\endgroup}%
  \addresses
  \endgroup
  \global\let\addresses=\@empty
}
\def\@setabstracta{%
    \ifvoid\abstractbox
  \else
    \skip@25\p@ \advance\skip@-\lastskip
    \advance\skip@-\baselineskip \vskip\skip@
    \box\abstractbox
    \prevdepth\z@ 
    \vskip-10pt
  \fi
}
\renewenvironment{abstract}{%
  \ifx\maketitle\relax
    \ClassWarning{\@classname}{Abstract should precede
      \protect\maketitle\space in AMS document classes; reported}%
  \fi
  \global\setbox\abstractbox=\vtop \bgroup
    \normalfont\small
    \list{}{\labelwidth\z@
      \leftmargin0pc \rightmargin\leftmargin
      \listparindent\normalparindent \itemindent\z@
      \parsep\z@ \@plus\p@
      
    }%
    \item[\hskip\labelsep\bfseries\abstractname.]%
}{%
  \endlist\egroup
  \ifx\@setabstract\relax \@setabstracta \fi
}
\def\section{\@startsection{section}{1}%
  \z@{-1.2\linespacing\@plus-.5\linespacing}{.8\linespacing}%
  {\normalfont\bfseries\large}}
\def\subsection{\@startsection{subsection}{2}%
  \z@{-.8\linespacing\@plus-.3\linespacing}{.3\linespacing\@plus.2\linespacing}%
  {\normalfont\bfseries}}
\def\subsubsection{\@startsection{subsubsection}{3}%
  \z@{.7\linespacing\@plus.1\linespacing}{-1.5ex}%
  {\normalfont\itshape}}
\def\@secnumfont{\bfseries}
\def\to{\mathchoice{\longrightarrow}{\rightarrow}{\rightarrow}{\rightarrow}}
\newcommand{\shortxra}[2][]{\ext@arrow 0359\rightarrowfill@{#1}{#2}}
\def\longrightarrowfill@{\arrowfill@\relbar\relbar\longrightarrow}
\newcommand{\longxra}[2][]{\ext@arrow 0359\longrightarrowfill@{#1}{#2}}
\renewcommand{\xrightarrow}[2][]{\mathchoice{\longxra[#1]{#2}}%
  {\shortxra[#1]{#2}}{\shortxra[#1]{#2}}{\shortxra[#1]{#2}}}
\def\Nopagebreak{\@nobreaktrue\nopagebreak}
\theoremstyle{plain}
\newtheorem{theorem}{Theorem}[section]
\newtheorem{proposition}[theorem]{Proposition}
\newtheorem{corollary}[theorem]{Corollary}
\newtheorem{lemma}[theorem]{Lemma}
\theoremstyle{definition}
\newtheorem*{question}{Question}
\newtheorem{remark}[theorem]{Remark}
\def\Z{\mathbb{Z}}
\def\Ker{\operatorname{Ker}}
\def\Im{\operatorname{Im}}
\def\trace{\operatorname{tr}}
\def\surj{\mathbin{\hbox to 0mm{$\rightarrow$\hss}\kern.5ex\hbox{$\rightarrow$}}}
\def\SL{\operatorname{SL}}
\def\sbmatrix#1{\big[\begin{smallmatrix}#1\end{smallmatrix}\big]}
\begin{document}

\title
{Non-meridional epimorphisms of knot groups}

\author{Jae Choon Cha}

\address{Department of Mathematics\\
  POSTECH \\
  Pohang 790--784\\
  Republic of Korea
  \quad-- and --\linebreak
  School of Mathematics\\
  Korea Institute for Advanced Study \\
  Seoul 130--722\\
  Republic of Korea}
\email{jccha@postech.ac.kr}

\author{Masaaki Suzuki}

\address{Department of Frontier Media Science\\
Meiji University\\
4--21--1 Nakano\\
Tokyo 164--8525\\
Japan
}
\email{macky@fms.meiji.ac.jp}

\def\subjclassname{\textup{2010} Mathematics Subject Classification}
\expandafter\let\csname subjclassname@1991\endcsname=\subjclassname
\expandafter\let\csname subjclassname@2000\endcsname=\subjclassname
\subjclass{%
  57M25. 
}


\begin{abstract}
  In the literature of the study of knot group epimorphisms, the
  existence of an epimorphism between two given knot groups is mostly
  (if not always) shown by giving an epimorphism which preserves
  meridians.  A natural question arises: is there an epimorphism
  preserving meridians whenever a knot group is a homomorphic image of
  another?  We answer in the negative by presenting infinitely many
  pairs of prime knot groups $(G,G')$ such that $G'$ is a homomorphic
  image of $G$ but no epimorphism of $G$ onto $G'$ preserves
  meridians.
\end{abstract}

\maketitle


\section{Introduction}

For a knot $K$ in $S^3$, its \emph{knot group} $G(K)$ is defined by
$G(K)=\pi_1(S^3-K)$.  The study of knot groups has long history, from
the beginning of modern knot theory.  In particular, recently,
\emph{epimorphisms} of knot groups have been receiving much attention.
A key problem is to determine when there is an epimorphism between two
knot groups.  For prime knots, which are of the most interest, it is
well known that a partial order $\ge$ is obtained by defining $K\ge
K'$ if there is an epimorphism $G(K)\to G(K')$ (see, for instance,
\cite[p.~422]{Ohtsuki-Riley-Sakuma:2008-1}).

There is a fair amount of recent work on this in the literature.  In
their remarkable work~\cite{Agol-Liu:2012-1}, Agol and Liu proved a
long-standing conjecture of Simon that a knot group surjects onto only
finitely many knot groups.  It follows that for any prime knot $K$,
there are only finitely many prime knots less than or equal to~$K$.
Together with Kitano, Horie, and Matsumoto, the second author
investigated pairs of prime knots with $11$ crossings or less whose
knot groups admit epimorphisms~\cite{Kitano-Suzuki:2005-1,
  Horie-Kitano-Matsumoto-Suzuki:2011-1, Kitano-Suzuki-Wada:2005-1,
  Kitano-Suzuki-Wada:2011-1}.  In particular, they constructed many
explicit examples of epimorphisms between knot groups.
Gonzal\'ez-Ac\~una and Ram\'inez studied which knot groups
(particulary those of 2-bridge knots) admit epimorphisms onto torus
knot groups~\cite{Gonzalez-Acuna-Ramirez:2001-1,
  Gonzalez-Acuna-Ramirez:2003-1}.  In work of Ohtsuki, Riley, and
Sakuma~\cite{Ohtsuki-Riley-Sakuma:2008-1}, Hoste and
Shanahan\cite{Hoste-Shanahan:2010-1}, and Lee and
Sakuma~\cite{Lee-Sakuma:2012-1}, systematic constructions of
epimorphisms between $2$-bridge knot (and link) groups were presented
and studied.  In~\cite{Silver-Whitten:2006-1, Silver-Whitten:2008-1},
Silver and Whitten studied knot group epimorphisms preserving
peripheral structure; in particular they showed that such epimorphisms
give rise to a partial order on the set of all knots.

\subsection*{Meridional epimorphisms}

Interestingly, most (if not all) results in the literature that a knot
is less than or equal to another are shown by presenting an
epimorphism which preserves meridians.  To be more precise, we use the
following terms: we call an element $[\alpha]\in G(K)$ a
\emph{meridian} if $\alpha$ is freely homotopic to a meridian curve
lying on the boundary of a tubular neighborhood of~$K$, and we say
that a homomorphism $G(K)\to G(K')$ is \emph{meridional} if a meridian
in $G(K)$ is sent to a meridian in~$G(K')$.  In this paper knots are
unoriented, so that a meridian may be endowed with any orientation.

The following natural question arises:

\begin{question}
  \label{question:main-question}
  Is there a meridional epimorphism $G(K)\to G(K')$ whenever there is
  an epimorphism $G(K)\to G(K')$?
\end{question}

We remark that it does not ask whether all knot group epimorphisms are
meridional; it is known that there exist non-meridional epimorphisms
of knot groups.  For instance see work of Johnson and
Livingston~\cite{Johnson-Livingston:1989-1}.  We also remark that
meridional epimorphisms can be related to geometric properties, for
example, periods of knots and degree one maps between knot exteriors.
See~\cite{Kitano-Suzuki:2008-1} for details.

There are several results supporting an affirmative answer to the
above question.  For any previously known example of a knot group
$G(K)$ which admits an epimorphism onto $G(K')$, there exists a
meridional epimorphism of $G(K)$ onto $G(K')$.  In particular, all the
epimorphisms found in ~\cite{Kitano-Suzuki:2005-1,
  Horie-Kitano-Matsumoto-Suzuki:2011-1, Kitano-Suzuki-Wada:2005-1,
  Kitano-Suzuki-Wada:2011-1} for groups of prime knots with 11 or less
crossings are meridional.  For torus knot groups, there is a
meridional epimorphism whenever there is an
epimorphism~\cite{Silver-Whitten:2008-1}.  Also, the knot group
epimorphisms in \cite{Gonzalez-Acuna-Ramirez:2001-1,
  Gonzalez-Acuna-Ramirez:2003-1, Ohtsuki-Riley-Sakuma:2008-1,
  Lee-Sakuma:2012-1} are all meridional.  An epimorphism between
nontrivial knot groups preserving peripheral structure in the sense of
~\cite{Silver-Whitten:2006-1, Silver-Whitten:2008-1} are known to be
meridional \cite[Proof of Theorem~4.1]{Silver-Whitten:2006-1},
\cite[Theorem~2.1]{Hoste-Shanahan:2010-1}.  We also remark that
epimorphisms preserving peripheral structure, particularly meridional
epimorphisms of prime knot groups, can be studied via maps of
3-manifolds with well-defined degree.


Our main result is, nevertheless, that the answer is in the negative.

\begin{theorem}
  \label{theorem:main-introduction}
  There are infinitely many distinct pairs of prime knots $(K,K')$ for
  which there is an epimorphism of $G(K)$ onto $G(K')$ but there is no
  meridional epimorphism of $G(K)$ onto $G(K')$.
\end{theorem}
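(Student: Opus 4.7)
My plan is to reduce the theorem to an algebraic obstruction, then construct explicit pairs of prime knots realizing it.

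For the obstruction, I would first observe that any epimorphism $f\colon G(K)\twoheadrightarrow G(K')$, meridional or not, induces a $\Lambda$-module surjection $A(K)\twoheadrightarrow A(K')$ on Alexander modules, where $\Lambda=\Z[t^{\pm1}]$. Indeed, $f$ is an isomorphism on abelianizations up to sign, hence maps $G(K)'$ onto $G(K')'$, and conjugation by $f(\mu_K)$ coincides with multiplication by $t$ on $A(K')$, because $f(\mu_K)$ differs from $\mu_{K'}$ by an element of $G(K')'$, which acts trivially on $A(K')$. So the Alexander module by itself cannot detect non-meridionality. The additional content of meridionality is geometric: since $X_K$ and $X_{K'}$ are Eilenberg--MacLane spaces and the peripheral $\Z^2$ is mapped into the peripheral $\Z^2$ by a meridional $f$, one can realize $f$ by a continuous map of exteriors preserving the meridional curve on the boundary torus. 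Through this map, the induced surjection on Alexander modules is compatible with the Blanchfield pairings, in the sense that $\Bl_{K'}$ is the quotient of $\Bl_K$ by a submodule on which $\Bl_K$ vanishes. This yields the key obstruction: if no $\Lambda$-surjection $A(K)\twoheadrightarrow A(K')$ is Blanchfield compatible, then no meridional epimorphism exists.

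For the construction, I would produce pairs $(K,K')$ of prime knots such that an epimorphism $G(K)\twoheadrightarrow G(K')$ exists, but every $\Lambda$-surjection $A(K)\twoheadrightarrow A(K')$ fails Blanchfield compatibility. The natural source is an explicit presentation: starting from a Wirtinger presentation of $G(K')$, I would build $K$ by augmenting with a small number of extra generators and relations (realized geometrically by inserting a tangle at a chosen arc of a diagram of $K'$) so that (a) the augmented presentation has a tautological epimorphism to $G(K')$ sending the meridian of $K$ to a longitude-shifted meridian of $K'$---thus provably non-meridional---and (b) the new generators contribute an extra summand to $A(K)$ whose nonzero Blanchfield self-pairing cannot be consistently killed in any quotient onto $A(K')$. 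Primeness of $K$ is then verified by a standard JSJ or tangle-sum argument, checking that the insertion produces no separating 2-sphere. Infinitely many distinct pairs are obtained by parametrizing the tangle insertion by a twist count and distinguishing the resulting knots by the Alexander polynomial or the knot genus.

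The main obstacle is the construction step. Every standard epimorphism construction surveyed in the introduction (torus knots, 2-bridge knots, Ohtsuki--Riley--Sakuma, and so on) is meridional, so our construction must depart from these in an essential way; in particular the existence of the surjection must be forced by a new mechanism that does not match a meridian to a meridian. Once a candidate family is in hand, verifying the Blanchfield obstruction against \emph{all} possible $\Lambda$-surjections $A(K)\twoheadrightarrow A(K')$ is the delicate point, but the Blanchfield pairing is a fairly rigid invariant, which gives me optimism that a generic tangle insertion contributing a sufficiently nontrivial summand to the Alexander module will suffice.
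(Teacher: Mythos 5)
Your proposal has two genuine gaps, one in the construction and one in the obstruction, and they sit exactly at the hard points of the theorem. On the construction side, you assert that inserting a tangle into a diagram of $K'$ yields a ``tautological'' epimorphism $G(K)\to G(K')$ carrying the meridian of $K$ to a longitude-shifted meridian of $K'$, but no mechanism is given that forces such a surjection to exist. Tangle/satellite-type modifications of a diagram naturally produce \emph{meridional} epimorphisms (this is exactly Lemma~\ref{lemma:satellite-epimorphism}), or no epimorphism at all; to get a knot group epimorphism hitting a prescribed non-meridional element $g\in G(K')$ one needs (i) a proof that $g$ is a normal generator of $G(K')$, (ii) a realization result such as the G\'onzalez-Acu\~na--Johnson theorem (Theorem~\ref{theorem:gonzales-acuna-johnson}) producing a knot whose meridian maps to $g$, and (iii) a proof that $g$ is not conjugate to a meridian. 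None of these steps is automatic -- in the paper, (i) and (iii) for the pseudo-meridians $w^qa$, $\bar w^qb$ of twist knots require a trace computation with parabolic $\SL(2,\C)$ representations together with Riley's and Hoste--Shanahan's results on the Riley polynomial. Moreover, even after all this you have only shown that \emph{your} epimorphism is non-meridional, which is not the statement: you must rule out \emph{every} meridional epimorphism $G(K)\to G(K')$.

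That is where the second gap lies. Your proposed obstruction via the Blanchfield pairing is both misstated and almost certainly too weak. A meridional epimorphism of prime knot groups is realized by a map of exteriors (by Silver--Whitten), but that map has some degree $d$ which need not be $\pm1$, and the clean statement ``$\Bl_{K'}$ is the quotient of $\Bl_K$ by a submodule on which the pairing vanishes'' is a degree-one statement; for general $d$ the pairings are only related up to multiplication by $d$, so the compatibility you want does not follow as claimed. More seriously, you give no evidence that any pair $(K,K')$ admitting an epimorphism can be arranged so that \emph{all} $\Lambda$-surjections of Alexander modules violate the (corrected) Blanchfield condition; abelian invariants of this kind are exactly what one expects to be insufficient here, and indeed the paper has to use the Kitano--Suzuki--Wada twisted Alexander polynomial criterion (Theorem~\ref{theorem:detecting-nonexistence-of-meridional-epi}) with an exhaustive, computer-aided enumeration of all $\SL(2,\mathbb F_5)$ resp.\ $\SL(2,\mathbb F_7)$ representations of the seed knots $K_T$ and $J_{-1}$ to exclude every meridional epimorphism, and then a separate satellite argument (Alexander polynomial one companion plus residual solvability of the target group, Theorem~\ref{theorem:satellite-meridional-epimorphisms}) to promote two seed examples to infinitely many, distinguished by their JSJ decompositions. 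Without a working substitute for both of these ingredients, your outline does not yet constitute a proof.
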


The proof of Theorem~\ref{theorem:main-introduction} proceeds as
follows.  In Section~\ref{section:construction-non-meridional-epi}, we
give constructions of pairs of knot groups which admit non-meridional
epimorphisms.  In Section~\ref{section:twisted-alexander-polynomial},
we detect the non-existence of meridional epimorphisms using twisted
Alexander polynomials.  This method allows us to obtain finitely many
(in fact two) ``seed'' examples.  We remark that it depends on heavy
computation infeasible by hand, and hence it seems unable to detect
infinitely many cases in this way.  In
Section~\ref{section:satellite-construction}, we present a geometric
method to produce, from the seed examples, infinitely many pairs of
knot groups which admit non-meridional epimorphisms but do not admit
meridional epimorphisms.  In
Appendix~\ref{section:tables-of-polynomials}, we present computational
results of certain twisted Alexander polynomials which are used to
prove the non-existence of a meridional epimorphism.

\subsection*{Acknowledgements}

The first author was partially supported by NRF grants 2013067043 and
2013053914. 
The second author was partially supported by KAKENHI (No. 24740035), 
Japan Society for the Promotion of Science, Japan.

\section{Construction of non-meridional knot group epimorphisms}
\label{section:construction-non-meridional-epi}

In this section we give certain explicit examples non-meridional
epimorphisms of knot groups, for some of which we will show the
non-existence of meridional epimorphisms in the next section.


\subsection{First example on the trefoil knot}
\label{subsection:first-example}

In this subsection we describe the first successful example of a pair
of knots satisfying Theorem~\ref{theorem:main-introduction}, which we
indeed found by ad-hoc trial and error attempts aided by a computer.

\begin{figure}[H]
  \labellist
  \small\hair 0mm
  \pinlabel {$x_1$} at 35 132
  \endlabellist
  \large $K_T = \vcenter{\hbox{\includegraphics[scale=.9]{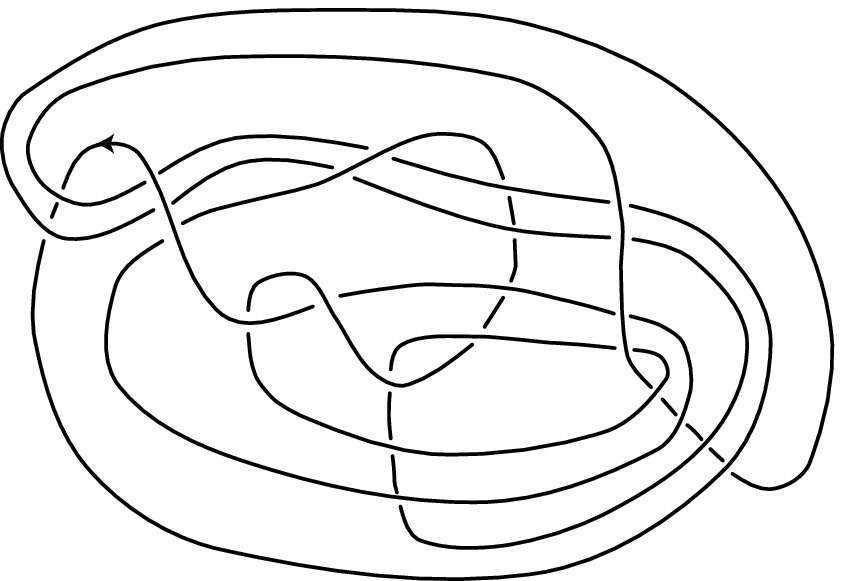}}}$
  \caption{A knot $K_T$.}
  \label{figure:example-K_T}
\end{figure}

Consider the knot $K_T$ shown in Figure~\ref{figure:example-K_T}.  The
Wirtinger presentation of the knot group $G(K_T)$ obtained from
Figure~\ref{figure:example-K_T} has 24 generators
$x_1,x_2,\ldots,x_{24}$, and 24 relators
{\allowdisplaybreaks\[
\begin{array}{llll}
  x_{6} x_{2} \bar{x}_{6} \bar{x}_{1}, & x_{10} x_{2} \bar{x}_{10} \bar{x}_{3}, 
  & x_{6} x_{3} \bar{x}_{6} \bar{x}_{4}, & x_{22} x_{4} \bar{x}_{22} \bar{x}_{5}, \\
  x_{1} x_{6} \bar{x}_{1} \bar{x}_{5}, & x_{17} x_{7} \bar{x}_{17} \bar{x}_{6}, 
  & x_{23} x_{7} \bar{x}_{23} \bar{x}_{8}, & x_{13} x_{9} \bar{x}_{13} \bar{x}_{8}, \\
  x_{3} x_{9} \bar{x}_{3} \bar{x}_{10}, & x_{1} x_{10} \bar{x}_{1} \bar{x}_{11}, 
  & x_{22} x_{12} \bar{x}_{22} \bar{x}_{11}, & x_{6} x_{13} \bar{x}_{6} \bar{x}_{12}, \\
  x_{23} x_{14} \bar{x}_{23} \bar{x}_{13}, & x_{17} x_{14} \bar{x}_{17} \bar{x}_{15}, 
  & x_{18} x_{16} \bar{x}_{18} \bar{x}_{15}, & x_{6} x_{17} \bar{x}_{6} \bar{x}_{16}, \\
  x_{1} x_{17} \bar{x}_{1} \bar{x}_{18}, & x_{16} x_{19} \bar{x}_{16} \bar{x}_{18}, 
  & x_{24} x_{19} \bar{x}_{24} \bar{x}_{20}, & x_{12} x_{21} \bar{x}_{12} \bar{x}_{20}, \\
  x_{4} x_{21} \bar{x}_{4} \bar{x}_{22}, & x_{1} x_{23} \bar{x}_{1} \bar{x}_{22}, 
  & x_{6} x_{23} \bar{x}_{6} \bar{x}_{24}, & x_{18} x_{24} \bar{x}_{18} \bar{x}_{1}.
\end{array}
\]}%
Here the generators are ordered along the orientation, starting from
the generator $x_1$ shown in Figure~\ref{figure:example-K_T}.
In the relators $\bar x$ denotes the inverse of~$x$.

Recall that the trefoil knot $T$ has the following Wirtinger presentation:
\[
G(T) 
= 
\langle
y_1, y_2 \mid y_1 y_2 y_1 = y_2 y_1 y_2 
\rangle .
\]

We define a map $f\colon G(K_T) \to G(T)$ as follows: 
{\allowdisplaybreaks\[
\begin{aligned}[t]
f(x_{1}) &= y_1y_2\bar y_1 y_2\bar y_1, \\ 
f(x_{2}) &= y_1\bar y_2 y_1\bar y_2 y_1\bar y_2 \bar y_1 y_2^3 \\&\qquad  \cdot \bar y_1 y_2\bar y_1 y_2\bar y_1, \\
f(x_{3}) &= y_1y_2\bar y_1 y_2\bar y_1^{\vphantom{1}}, \\
f(x_{4}) &= y_1\bar y_2 y_1y_2\bar y_1 y_2\bar y_1 y_2y_1\bar y_2 \bar y_1 y_2\bar y_1, \\
f(x_{5}) &= y_2y_1y_2\bar y_1 y_2\bar y_1 y_2y_1\bar y_2 \bar y_1 \bar y_2, \\
f(x_{6}) &= y_1\bar y_2 y_1y_2\bar y_1 y_2\bar y_1 y_2\bar y_1^{\vphantom{1}} , \\
f(x_{7}) &= y_1\bar y_2 y_1\bar y_2^3  y_1y_2^2\bar y_1 \bar y_1  y_2^3\bar y_1 y_2\bar y_1, \\
f(x_{8}) &= y_1\bar y_2 y_1y_2\bar y_1 y_2\bar y_1 y_2\bar y_1^{\vphantom{1}}, \\
f(x_{9}) &= y_1\bar y_2 y_1\bar y_2^2 y_1y_2\bar y_1 y_2\bar y_1  \\&\qquad  \cdot y_2^2\bar y_1 y_2\bar y_1, \\
f(x_{10}) &= y_1\bar y_2 y_1y_2\bar y_1 y_2\bar y_1 y_2\bar y_1^{\vphantom{1}} , \\
f(x_{11}) &= y_1y_2y_2\bar y_1^2, \\
f(x_{12}) &= y_1\bar y_2^2 y_1y_2^2\bar y_1^2 y_2^2 \bar y_1, \\
\end{aligned}
\quad
\begin{aligned}[t]
f(x_{13}) &= y_1y_2\bar y_1 y_2\bar y_1, \\
f(x_{14}) &= y_1\bar y_2 y_1\bar y_2^3 y_1y_2\bar y_1 y_2\bar y_1 y_2^3\bar y_1 y_2\bar y_1, \\
f(x_{15}) &= y_1y_2\bar y_1 y_2\bar y_1, \\
f(x_{16}) &= y_1\bar y_2^2 y_1y_2\bar y_1 y_2\bar y_1 y_2^2 \bar y_1, \\
f(x_{17}) &= y_1\bar y_2 y_1\bar y_2 y_1\bar y_2 \bar y_1 \bar y_2 y_1y_2\bar y_1  \\&\qquad  \cdot  y_2\bar y_1 y_2y_1y_2\bar y_1 y_2\bar y_1 y_2\bar y_1, \\
f(x_{18}) &= y_2^2\bar y_1, \\
f(x_{19}) &= y_1\bar y_2^2 y_1\bar y_2 y_1\bar y_2 \bar y_1 y_2^3 \bar y_1 y_2\bar y_1 y_2^2\bar y_1, \\
f(x_{20}) &= y_2^2\bar y_1, \\
f(x_{21}) &= y_1\bar y_2 y_1y_2\bar y_1 \bar y_2 y_1\bar y_2 y_1\bar y_2 \bar y_1 y_2\bar y_1^{\vphantom{1}} \\&\qquad  \cdot  y_2 y_1y_2\bar y_1 y_2\bar y_1 y_2y_1\bar y_2 \bar y_1 y_2\bar y_1^{\vphantom{1}}, \\
f(x_{22}) &= y_2^2\bar y_1, \\
f(x_{23}) &= y_1\bar y_2 y_1\bar y_2 \bar y_1 y_2^3 \bar y_1 y_2\bar y_1, \\
f(x_{24}) &= y_1\bar y_2 y_1y_2\bar y_1 \bar y_1 y_2^2 y_1\bar y_2 \bar y_1 y_2\bar y_1 .
\end{aligned}
\]}%

\begin{theorem}
  The map $f\colon G(K_T) \to G(T)$ is a non-meridional epimorphism. 
\end{theorem}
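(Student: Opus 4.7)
The statement has three parts: that $f$ extends to a well-defined homomorphism, that this homomorphism is surjective, and that it sends no meridian of $G(K_T)$ to a meridian of $G(T)$. The plan is to address them in order.

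First, to establish well-definedness, I would substitute the listed values of $f(x_i)$ into each of the $24$ Wirtinger relators $x_j x_i \bar x_j \bar x_k$ of $G(K_T)$ and reduce the resulting word in $\{y_1^{\pm 1},y_2^{\pm 1}\}$ using the single braid relation $y_1 y_2 y_1 = y_2 y_1 y_2$ of $G(T)$. This step is entirely mechanical but voluminous, and in practice I would delegate it to a computer algebra package. Second, for surjectivity, I would exploit the fact that several of the images are short; in particular $f(x_{18}) = y_2^{2}\bar y_1$ and $f(x_{11}) = y_1 y_2^{2}\bar y_1^{2}$ lie in the image, and together with $f(x_1)= y_1 y_2 \bar y_1 y_2 \bar y_1$ they already produce cancellation-friendly combinations. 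Using the braid relation, one can construct explicit products in the $f(x_i)$'s reducing to $y_1$ and $y_2$; this is essentially a search among short Tietze combinations.

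For the non-meridional assertion, the key observation is that every meridian of $G(T)$ is conjugate to $y_1^{\pm 1}$, and every meridian of $G(K_T)$ is conjugate to $x_1^{\pm 1}$. Hence $f$ is meridional if and only if $f(x_1)$ is conjugate in $G(T)$ to $y_1^{\pm 1}$. I would rule this out using the classical surjection $\rho \colon G(T) \to \SL(2,\Z)$ given by $\rho(y_1) = \sbmatrix{1&1\\0&1}$ and $\rho(y_2) = \sbmatrix{1&0\\-1&1}$, which realizes $G(T)/Z(G(T)) \cong \mathrm{PSL}(2,\Z)$. Under $\rho$ both $y_1$ and $y_1^{-1}$ are parabolic of trace $2$, and so is every one of their conjugates. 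A direct matrix computation yields $\rho(f(x_1)) = \sbmatrix{-1&2\\-3&5}$, of trace $4$, ruling out the conjugacy and proving $f$ is non-meridional.

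The main obstacle is the well-definedness check: $24$ long words verified with a single available identity is infeasible by hand but routine on a computer. The non-meridional part, in contrast, collapses to a single $2\times 2$ trace computation once the representation $\rho$ is fixed. The authors' alternative route through twisted Alexander polynomials is heavier but more flexible, presumably because that flexibility becomes essential for the subsequent, more delicate examples where the plain trace test may fail to distinguish meridian images from non-meridian ones.
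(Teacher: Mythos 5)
Your proposal is correct and follows essentially the same route as the paper: the relator check is treated as a mechanical (computer-assisted) verification, surjectivity is obtained by exhibiting words in the $x_i$ whose images reduce to $y_1$ and $y_2$ (the paper records the explicit ones, e.g.\ $f(x_{18}x_{6}\bar x_{1}\bar x_{1}x_{18}x_{6}\bar x_{1})=y_1$, which your proposed search would produce), and the non-meridional claim is proved with exactly the same representation $\rho\colon G(T)\to \SL(2,\Z)$ and the trace comparison $\operatorname{tr}\rho(f(x_1))=4\neq 2=\operatorname{tr}\rho(y_1^{\pm1})$. The only thing to add is the actual preimage words for surjectivity; otherwise your argument coincides with the paper's.
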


\begin{proof}
  It is shown that $f$ is a group homomorphism, by directly verifying that the
  relators of $G(K_T)$ vanish under~$f$.  For instance we have
  \begin{align*}
    f ( x_{6} x_{2} \bar{x}_{6} \bar{x}_{1} ) & = y_1 \bar y_2 y_1 y_1
    \bar y_1 y_1 \bar y_1 y_1 \bar y_1 \cdot y_1 \bar y_2 y_1 \bar y_2
    y_1 \bar y_2 \bar y_1 y_1^3 \bar y_1
    y_1 \bar y_1 y_1 \bar y_1 
    \\
    &\qquad \cdot y_1 \bar y_2 y_1 \bar y_2 y_1 \bar y_2 \bar y_1 y_1
    \bar y_1 \cdot y_1 \bar y_2 y_1 \bar y_2 \bar y_1 = e
    \\
    f( x_{10} x_{2} \bar{x}_{10} \bar{x}_{3} ) &= 
    y_1 \bar y_2 y_1 y_2 \bar y_1 y_2 \bar y_1 y_2 \bar y_1 \cdot 
    y_1 \bar y_2 y_1 \bar y_2 y_1 \bar y_2 \bar y_1 y_2^3 \bar
    y_1 y_2 \bar y_1 y_2 \bar y_1 
    \\
    &\qquad \cdot 
    y_1 \bar y_2 y_1 \bar y_2 y_1 \bar y_2 \bar y_1 y_2 \bar y_1 \cdot 
    y_1 \bar y_2 y_1 \bar y_2 \bar y_1 = e
  \end{align*}
  and so forth.

  To show that $f$ is an epimorphism, we explicitly describe elements
  of $G(K_T)$ which are sent to generators of~$G(T)$:
  \begin{align*}
    f( x_{18} x_{6} \bar{x}_{1} \bar{x}_{1} x_{18} x_{6} \bar{x}_{1} )
    &= y_2^2\bar y_1 \cdot y_1 \bar y_2 y_1 y_2 \bar y_1 y_2 \bar y_1 y_2 \bar y_1
    \cdot y_1 \bar y_2 y_1 \bar y_2 \bar y_1 \cdot y_1 \bar y_2 y_1 \bar y_2
    \bar y_1
    \\
    &\qquad
    \cdot y_2^2 \bar y_1 \cdot y_1 \bar y_2 y_1 y_2 \bar y_1 y_2 \bar y_1 
    \bar y_1 \cdot y_1 \bar y_2 y_1 \bar y_2 \bar y_1 \\
    &= y_2 y_1 y_2 \bar y_1 \bar y_2 y_1 \bar y_2 \bar y_1 y_2 y_1 y_2 \bar y_1 \bar y_1
    \\
    & = y_1 y_2 y_1 \bar y_1 \bar y_2 y_1 \bar y_2 \bar y_1 y_1 y_2 y_1 \bar y_1 \bar y_1
    = y_1 .
  \end{align*}
  Similarly we have the following:
  \[
  f(x_{1} \bar{x}_{6} \bar{x}_{18} x_{1} x_{18} x_{6} \bar{x}_{1}^2
  x_{18} x_{6} \bar{x}_{1}) = y_2 .
  \]

  Although we will show that there is no meridional epimorphism of
  $G(K_T)$ onto $G(T)$ in the next section, we present here a simple
  direct proof that our $f$ is not meridional.  Define a
  representation $\rho\colon G(T) \to \SL(2,\Z)$ by
  \[
  \rho(y_1) = \begin{bmatrix} 1 & 1 \\ 0 & 1 \end{bmatrix},\quad
  \rho(y_2) = \begin{bmatrix} 1 & 0 \\ -1 & 1\end{bmatrix}.
  \]
  It is straightforward to verify that $\rho$ is well-defined.  The
  image of $f(x_1)$ under $\rho$ is given by
  \[
  \rho ( f (x_1) ) = 
  \rho ( y_1 y_2 \bar y_1 y_2 \bar y_2 ) = 
  \begin{bmatrix}
    -1 & 2 \\ -3 & 5
  \end{bmatrix}.
  \]
  It has trace 4, while the trace of $\rho(y_1)$ is 2.  It follows
  that the image $f(x_1)$ of the meridian $x_1$ of $K_T$ is not
  conjugate to the meridian $y_1$ of~$T$.
\end{proof}

We remark that $K_T$ is a hyperbolic knot, according to
SnapPy~\cite{SnapPy}, and consequently, $K_T$ is prime.

\subsection{Construction using normal generators and Johnson's method}
\label{subsection:normal-generator-johnson-method}

To describe the second succesful example satisfying
Theorem~\ref{theorem:main-introduction}, we employ a more systematic
construction which combines algebraic computations in knot groups and
geometric realization arguments.  In the first step we construct a
non-meridional normal generator, and in the second step, we construct
knot group homomorphisms realizing the normal generator as the image
of a meridian.

\subsubsection*{Finding pseudo-meridian: twist knots}
\label{subsubsection:computation-for-twist-knots}

It is well known that a meridian is a normal generator of a knot
group.  We call a normal generator of a knot group a
\emph{pseudo-meridian}, that is, $w \in G(K)$ is called a
pseudo-meridian if $G(K)/ \langle w \rangle$ is trivial, where
$\langle w \rangle$ is the normal closure of $w$.

We will present useful pseudo-meridians of twist knots.  Let $J(2,2q)$
be the twist knot shown in Figure~\ref{figure:twist-knot} $(q \in
{\mathbb Z})$.  For example, $J(2,0)$ is the trivial knot, $J(2,2)$ is
the trefoil knot, and $J(2,-2)$ is the figure eight knot.  The
presentation of $G(J(2,2q))$ is given by
\[
 G(J(2,2q)) = 
\langle a,b ~ | ~ w^q a = b w^q \rangle, \quad w = [b,a^{-1}]. 
\]

\begin{figure}[ht]
  \labellist
  \small\hair 0mm
  \pinlabel {$2q$-crossings} at 107 40
  \endlabellist
  \includegraphics[scale=0.7]{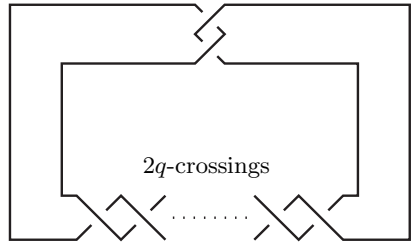}
  \caption{The twist knot $J(2,2 q)$.}
  \label{figure:twist-knot}
\end{figure}

\begin{proposition}\label{prop-pseudo-meridian-twistknot}
  Let 
  \[
  g_1 =\begin{cases}
    w^q a & \text{if $q > 0$},\\
    \bar{w}^q b & \text{if $g < 0$}.
  \end{cases}
  \]
  Then $g_1$ is a pseudo-meridian, but not a meridian for $q \neq 0$.
\end{proposition}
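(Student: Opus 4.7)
The proposition has two parts, and I would handle them by separate strategies.

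\textbf{Pseudo-meridian.} The first part follows from a direct algebraic manipulation in the presentation $\langle a, b \mid w^q a = b w^q \rangle$. For $q > 0$, adjoining the relation $g_1 = w^q a = 1$ and combining with the defining relation forces $b w^q = 1$, so $w^q = b^{-1} = a^{-1}$, hence $a = b$ in the quotient; then $w = [b, a^{-1}] = 1$, whence $w^q = 1$ and $a = b = 1$. The analogous computation with $a, b$ interchanged handles $q < 0$.

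\textbf{Not a meridian.} All meridians of $G(K)$ lie in a single conjugacy class, that of the Wirtinger generator $a$. Since $w \in [G, G]$, the element $g_1 = w^q a$ has the same abelianization as $a$, so the goal reduces to proving $g_1$ is not conjugate to $a$ in $G(J(2,2q))$. I plan to obstruct the conjugacy by a non-abelian representation $\rho \colon G(J(2,2q)) \to \SL(2, \C)$ for which $\operatorname{tr}(\rho(g_1)) \neq \operatorname{tr}(\rho(a)) = 2$. The natural choice is the Riley-type family
\[
\rho(a) = \begin{bmatrix} 1 & 1 \\ 0 & 1 \end{bmatrix}, \qquad \rho(b) = \begin{bmatrix} 1 & 0 \\ -u & 1 \end{bmatrix},
\]
with $u \in \C$ a nonzero root of the polynomial $\Phi_q(u)$ encoding the relation $\rho(w^q a) = \rho(b w^q)$. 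A direct matrix computation yields $\operatorname{tr}(\rho(w)) = u^2+2$, and the standard $\SL(2)$ trace recurrence
\[
\operatorname{tr}(\rho(w^{n+1} a)) = (u^2+2)\operatorname{tr}(\rho(w^n a)) - \operatorname{tr}(\rho(w^{n-1} a))
\]
with initial values $2$ and $2u^2 + 2$ determines $f_q(u) := \operatorname{tr}(\rho(g_1))$ as a polynomial in $u$; it then suffices to show $\Phi_q(u) \nmid (f_q(u) - 2)$.

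\textbf{Main obstacle.} The crux is this last non-divisibility, uniformly in $q \neq 0$. I would combine three ingredients: (i) a Taylor expansion at $u = 0$ giving $f_q(u) - 2 = c_q u^2 + O(u^3)$ with nonzero $c_q$ (explicitly $q^2$ for $q > 0$ and $q(q-1)$ for $q < 0$), so $u^2 \mid (f_q - 2)$ with nonzero quotient; (ii) the observation that at $u = 0$ the representation $\rho$ becomes abelian and fails the twist-knot relation, so $\Phi_q(0) \neq 0$ and $\gcd(\Phi_q, u) = 1$; (iii) the degree bound $\deg \Phi_q \geq 2|q| - 1$ for the Riley polynomial of $J(2,2q)$, from standard $\SL(2)$ character-variety computations for two-bridge knots. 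These together force $\Phi_q \nmid (f_q - 2)$, yielding a root $u_0$ of $\Phi_q$ with $f_q(u_0) \neq 2$, which proves $g_1$ is not conjugate to $a$, hence not a meridian.
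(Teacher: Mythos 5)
Your first half (kill $g_1$ in the quotient, deduce $a=b$, hence $w=e$ and then $a=b=e$) is exactly the paper's argument, and your overall strategy for the second half --- obstructing conjugacy of $g_1$ and $a$ by a trace inequality at a parabolic Riley representation, computed via the Cayley--Hamilton recursion with $\operatorname{tr}\rho(w)=u^2+2$ --- is also the route the paper takes. The genuine gap is in how you extract a root of $\Phi_q$ at which $f_q(u)\ne 2$. From (i) $u^2\mid (f_q-2)$ with nonzero quotient $g$, (ii) $\Phi_q(0)\ne 0$, and (iii) $\deg\Phi_q\ge 2|q|-1$, you conclude $\Phi_q\nmid(f_q-2)$ and then assert this yields a root $u_0$ of $\Phi_q$ with $f_q(u_0)\ne 2$. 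That last inference is invalid as stated: if $\Phi_q$ had repeated roots, $f_q-2$ could vanish at every root of $\Phi_q$ without $\Phi_q$ dividing it (compare $\Phi=(u-1)^2$ and $f-2=(u-1)h$ with $h(1)\ne0$). All your ingredients give is that the radical of $\Phi_q$ divides $g$, and you have no lower bound on the degree of the radical. This is precisely the point where the paper inputs the Hoste--Shanahan theorem that the Riley polynomial $\phi_q$ is \emph{irreducible} (of degree $2q-1$ for $q>0$ and $2|q|$ for $q<0$): irreducibility gives at least $2|q|-1$ distinct nonzero roots, which is more than the degree $2|q|-2$ of the quotient $g$ can accommodate, so some parabolic representation separates the traces. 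To close your argument you must invoke squarefreeness/irreducibility of $\Phi_q$ (or some other control on its number of distinct roots); the degree lower bound alone does not suffice.

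Two smaller points. You also need the upper bound $\deg(f_q-2)\le 2|q|$ (immediate by induction from your recursion) in order to know $\deg g<\deg\Phi_q$; it should be stated. And your explicit quadratic coefficients are off: with initial values $0$ and $2u^2$ the recursion gives coefficient $q(q+1)$ for $q>0$ (e.g.\ $p_2=2u^4+6u^2$), not $q^2$; only the non-vanishing matters, but be warned that the paper's own initial data for $q<0$ are $p_{-1}(u)=0$ and $p_{-2}(u)=2u^2$, so whether the $u^2$-coefficient is nonzero for negative $q$ (in particular $q=-1$) depends on how $\bar w^{q}b$ is indexed, and this needs to be reconciled with the convention in the statement rather than asserted.
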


\begin{proof}
  First we show that $g_1$ is a pseudo-meridian by verifying that
  $G(J(2,2q))/\langle g_1 \rangle$ is trivial.  In the quotient, $g_1
  = e$, that is, $w^q = \bar{a}$ if $q > 0$.  Then the relation of the
  twist knot group gives $\bar{a} a = b \bar{a}$.  This implies that
  $b = a$ and $w = [b,\bar{a}] = e$.  Therefore $a = b = \bar{w}^q =
  e$.  Hence $g_1$ is a pseudo-meridian.  Similarly, we can show that
  $g_1 = \bar{w}^q b$ is a pseudo-meridian for $q < 0$.

  Next we show that $g_1$ is not conjugate to $a$, that is, $g_1$ is
  not a meridian of the twist knot.  Let $\rho : G(J(2,2q)) \to
  SL(2,{\mathbb C})$ be defined by
  \[
  \rho(a) = \begin{bmatrix} 1 & 1 \\ 0 & 1 \end{bmatrix},\quad
  \rho(b) = \begin{bmatrix} 1 & 0 \\ -u & 1\end{bmatrix},\quad  u \in {\mathbb C}-\{0\} .
  \]
  We set $\phi_q (u)$ by the $(1,1)$-entry of $\rho (w^q)$.  It is
  easy to see that if $u=0$, then $\rho$ is not a representation.
  More precisely, Riley~\cite{Riley:1972-1} showed that $\rho$ is a
  non-abelian parabolic representation if and only if $\phi_q (u) =
  0$.
  Moreover, Hoste-Shanahan~\cite{Hoste-Shanahan:2001-1} proved that
  $\phi_q (u)$ is irreducible and that $\deg \phi_q (u)$ is $2 q -1$
  if $q > 0$ and $2 |q|$ if $q < 0$.

  We define a polynomial $p_q (u)$ by 
  \[
  p_q (u) = 
  \begin{cases}
    \trace (\rho (w^q a)) - \trace (\rho (a)) = \trace (\rho (w^q a)) - 2
    & \text{if }q > 0, \\
    \trace (\rho (\bar{w}^q b)) - \trace (\rho (a)) = \trace (\rho (\bar{w}^q b)) - 2 
    & \text{if }q < 0. 
  \end{cases}
  \]
  First, consider the case $q > 0$.  Caley-Hamilton theorem gives us
  \[
  \rho (w)^2 - (\trace \rho (w)) \rho (w) + I = 0. 
  \]
  The trace of $\rho (w)$ is $u^2 + 2$.  Multiplying both sides by $\rho
  (w^{q-2} a)$, we obtain
  \[
  \rho (w^q a) = (u^2 + 2) \rho (w^{q-1} a) - \rho (w^{q-2} a). 
  \]
  Taking the trace of the both sides and using the definition of $p_q
  (u)$, we obtain a recursion formula for~$p_q (u)$:
  \[
  p_q (u) = (u^2 + 2) p_{q-1} (u) - p_{q-2} (u) + 2 u^2  
  \]
  (cf.\ \cite{Hoste-Shanahan:2001-1}).  Since $p_1 (u) = 2 u^2$ and
  $p_2 (u) = 2 u^4 + 6 u^2$, we conclude that $p_q (u)$ has a factor
  $u^2$ and $\deg p_q (u)$ is $2 q$.  Then $p_q (u)$ can be written as
  \[
  p_q (u) = u^2 \overline{p}_q (u)
  \]
  and the degree of $\overline{p}_q (u)$ is $2q-2$, which is less than
  $\deg \phi_q (u)$.  Since $\phi_q(u)$ is irreducible, $\phi_q (u)$
  does not have a multiple root, that is, $\phi_q (u)$ has distinct
  $2q-1$ roots, which are not zero.  Hence there exists at least one
  root $u \in {\mathbb C}$ of $\phi_q(u) = 0$ such that $p_q (u) \neq
  0$, namely,
  \[
  \trace (\rho (w^q a)) \neq \trace (\rho (a)). 
  \]
  This implies $w^q a$ is not conjugate with $a$.  Similarly, we prove
  the statement for the case $q < 0$.  The recursion formula of $p_q
  (u)$ is given by
  \[
  p_q (u) = (u^2 + 2) p_{q+1} (u) - p_{q+2} (u) + 2 u^2. 
  \]
  By straightforward computation, $p_{-1} (u) = 0$ and $p_{-2} (u) = 2
  u^2$.  Then the same argument as the case $q > 0$ holds.  Therefore
  $\bar{w}^q b$ is not conjugate with $a$.  This completes the proof.
\end{proof}

We can produce a generating set of the twist knot group by
conjugating~$g_1$.  In the case $q > 0$, the relation can be written
as $(g_1 \bar{a}) a = b (g_1 \bar{a})$.  Then $b= g a \bar{g}_1$ and
\[
w = [b,\bar{a}] = g_1 a \bar{g}_1 \cdot \bar{a} \cdot g_1 \bar{a}
\bar{g}_1 \cdot a = [g_1,a][g_1,\bar{a}] .
\]
Therefore we obtain 
\begin{align*}
  a 
  &= ([\bar{a},g_1] [a,g_1])^q g \\
  &= \bar{a} g_1 a \cdot \bar{g}_1 \cdot a g_1 \bar{a} \cdot \bar{g}_1 \cdots 
  \bar{a} g_1 a \cdot \bar{g}_1 \cdot a g_1 \bar{a} \\
  &= (g_2 \bar{g}_1 g_3 \bar{g}_1)^{q-1} g_2 \bar{g}_1 g_3 ,
\end{align*}
where $g_2 = \bar{a} g_1 a$ and $g_3 = a g_1 \bar{a}$.  It follows
that $G(J(2,2q))$ is generated by the three conjugate elements $g_1$,
$g_2$, and~$g_3$.

Similarly, in the case $q < 0$, we can show that $G(J(2,2q))$ is
generated by the three conjugate elements $g_1, g_2 = \bar{b} g_1 b,
g_3= b g_1 \bar{b}$.  Here $b$ can be expressed as $(g_3 \bar{g}_1
g_2 \bar{g}_1)^q g_1$.

\subsubsection*{Johnson's method for knot group epimorphs}
\label{subsubsection:johnson-method}

Gonz\'alez-Acu\~na and Johnson showed independently the following.

\begin{theorem}[\cite{Gonzalez-Acuna:1975-1,Johnson:1980-1}]
  \label{theorem:gonzales-acuna-johnson}
  Let $G$ be a group finitely generated by the conjugates of $g \in
  G$.  Then there is a knot with group $G(K)$ and meridian $\mu \in
  G(K)$, and a homomorphism $\varphi$ of $G(K)$ onto $G$ carrying
  $\mu$ onto $g$.
\end{theorem}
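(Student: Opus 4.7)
My plan is to realize $G$ as a quotient of a knot group built by band-summing an unlink, arranged so that the resulting surjection sends a meridian to $g$. The starting point is the observation that after possibly enlarging the generating set so that $g$ itself appears, we can list the generators as $g_1 = g, g_2, \ldots, g_n$ with each $g_i = w_i g w_i^{-1}$ for some word $w_i$ in the $g_j$. Letting $F_n = F(x_1, \ldots, x_n)$ and $W_i \in F_n$ be the formally identical word, the map $\pi \colon F_n \to G$ with $\pi(x_i) = g_i$ is surjective. My goal is to construct a knot $K$ whose group $G(K)$ is a quotient of $F_n$ through which $\pi$ factors.

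Geometrically, I would start with a trivial $n$-component link $L = L_1 \sqcup \cdots \sqcup L_n$ in $S^3$, whose complement group is exactly $F_n$ with meridians $x_i$. For each $i = 2, \ldots, n$, I would choose an embedded arc $\alpha_i$ from $L_i$ to $L_1$, routed so that together with short arcs along those two components it represents the word $W_i$ in $F_n$; any element of $F_n$ is realized by some such arc, and the arcs may be made mutually disjoint by a small perturbation in the $3$-manifold $S^3 \setminus L$. Thickening each $\alpha_i$ to a band $B_i$ and performing all $n-1$ band-sums simultaneously joins $L$ into a single component, a knot $K$, while the core $x_1$ of $L_1$ remains a meridian $\mu$ of $K$.

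A Van Kampen calculation, decomposing $S^3 \setminus K$ as $S^3 \setminus L$ glued to tubular neighborhoods of the $B_i$, shows that band $B_i$ contributes exactly the relation $x_i = W_i x_1 W_i^{-1}$, so
\[
G(K) \;\cong\; F_n \big/ \langle\!\langle\, x_i W_i x_1^{-1} W_i^{-1} \;:\; i = 2, \ldots, n \,\rangle\!\rangle.
\]
Now $\pi$ sends each added relator to $g_i w_i g^{-1} w_i^{-1} = g_i g_i^{-1} = e$, so it descends to a surjection $\varphi \colon G(K) \to G$ with $\varphi(\mu) = \varphi(x_1) = g_1 = g$, as required.

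The step that will require the most care is the band-sum computation: I would need to verify that each embedded band whose core represents $W_i$ contributes exactly the single relator $x_i W_i x_1^{-1} W_i^{-1}$, with the correct orientation sign and no extra hidden relations, and that the effect is independent of the order in which the bands are added. This is a standard local-to-global Van Kampen argument applied to a regular neighborhood of each band, but basepoint conventions, meridian orientations, and the mutual independence of the $n-1$ bands must be tracked with care. Once those details are settled, everything else—the existence of arcs realizing any prescribed word of $F_n$, the reduction from $n$ components to $1$ under $n-1$ band-sums, and the descent of $\pi$ to $\varphi$—is formal.
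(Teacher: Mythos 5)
Your geometric construction (band-summing an $n$-component unlink along bands that spell the words $w_i$) is exactly the construction the paper attributes to Johnson, but the step you call ``standard Van Kampen'' is where the argument genuinely breaks, and it is not a matter of bookkeeping: the claimed isomorphism
\[
G(K)\;\cong\;F_n\big/\langle\!\langle\, x_iW_ix_1^{-1}W_i^{-1}\,\rangle\!\rangle
\]
is false in general. The words $W_i$ only record how the cores of the bands link the unlink components; they see nothing of the knotting of the bands themselves or of how the bands link one another, and these change $K$ and $G(K)$ drastically. Concretely, take $n=2$ and a band whose core is unlinked from both circles ($W_2$ trivial) but is tied into a nontrivial knot $J$: the band sum is the ribbon knot $J\#(-J)$, whose group is a nontrivial amalgam of two copies of $G(J)$, whereas your formula predicts $\langle x_1,x_2\mid x_2x_1^{-1}\rangle\cong\Z$. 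There is also a directional problem with the descent itself: $S^3\setminus L$ is \emph{not} a subspace of $S^3\setminus K$ (the knot $K$ runs through $N(L)$ and the bands), so there is no inclusion-induced epimorphism $F_n\to G(K)$ through which $\pi$ could factor; the natural inclusion-induced map goes from the complement of $N(L)\cup\bigcup B_i$ and points the other way.

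What is true, and what the theorem needs, is only an epimorphism $G(K)\to G$. The standard repair (essentially Johnson's argument, which the paper leaves as ``not too difficult to see'') is to define the homomorphism directly on a Wirtinger presentation of the banded knot: label the arc coming from $L_i$ by $g_i$, transport labels along the band arcs by conjugating by the label of each overpassing arc, and check that all Wirtinger relations hold in $G$ and that the labels close up at the band ends precisely because $g_i=w_ig\bar w_i$; the resulting homomorphism is onto since every $g_i$ occurs as a label, and it sends the meridian $\mu$ on the $L_1$ part to $g$. Equivalently, your quotient $F_n/\langle\!\langle x_iW_ix_1^{-1}W_i^{-1}\rangle\!\rangle$ is (up to the choices above) the group of the associated ribbon disk exterior in $B^4$, and one composes the surjection $G(K)\twoheadrightarrow\pi_1(B^4\setminus D)$ with the map induced by $\pi$. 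Either way, you must replace the false presentation of $G(K)$ by a surjection \emph{from} $G(K)$ onto that quotient (or directly onto $G$); as written, the proposal's central step does not hold.
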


Johnson gave a proof of this result by presenting a process to
construct such a knot~$K$.  Here we review this process.

By hypothesis, $G$ is finitely generated and normally generated
by~$g$.  Thus we can choose finitely many generators of the form
$g_1=g$, $g_2= w_2 g \bar{w}_2$, \ldots, $g_n= w_n g \bar{w}_n$.  Take
a trivial link with $n$ components, and label the components by
$g_1,g_2,\ldots g_n$.  For each $i\ge 2$, we connect the component
$g_i$ to $g_1$ along a band which represent the word~$w_i$; for
example, if $w_2 = g_1 \bar{g}_3$, we connect the first circle and the
second circle as in Figure \ref{figure:connect-circles}.  Let $K$ be
the resulting knot.  Then it is not too difficult to see that $G(K)$
satisfies the conclusion of
Theorem~\ref{theorem:gonzales-acuna-johnson}.

\begin{figure}[H]
  \labellist
  \small\hair 0mm
  \pinlabel {$g_1$} at 36 107
  \pinlabel {$g_2$} at 158 107
  \pinlabel {$g_3$} at 285 107
  \endlabellist
  \vspace*{2em}
  \includegraphics{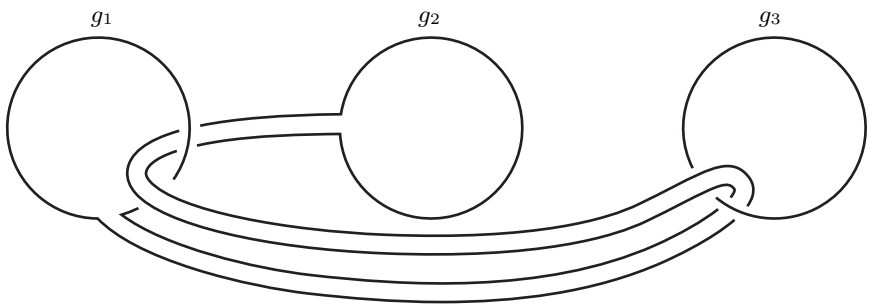}
  \caption{The connecting band for $w_2 = g_1 \bar{g}_3$.}
  \label{figure:connect-circles}
\end{figure}

For the twist knot group $G(J(2,2q))$, we use the pseudo-meridian
$g_1$ defined in Proposition~\ref{prop-pseudo-meridian-twistknot}.  By
the above process of Theorem \ref{theorem:gonzales-acuna-johnson}, we
can construct a knot $J_q$ for which there exists an epimorphism
$\varphi\colon G(J_q) \to G(J(2,2q))$ that sends a meridian of $J_q$
to the pseudo-meridian $g_1$ of $J(2,2q)$.  In particular, $\varphi$
is non-meridional.

\begin{figure}[H]
  \labellist
  \small\hair 0mm
  \pinlabel {$q$} at 11 90
  \pinlabel {$q$} at 384 289
  \endlabellist
  \includegraphics[scale=0.67]{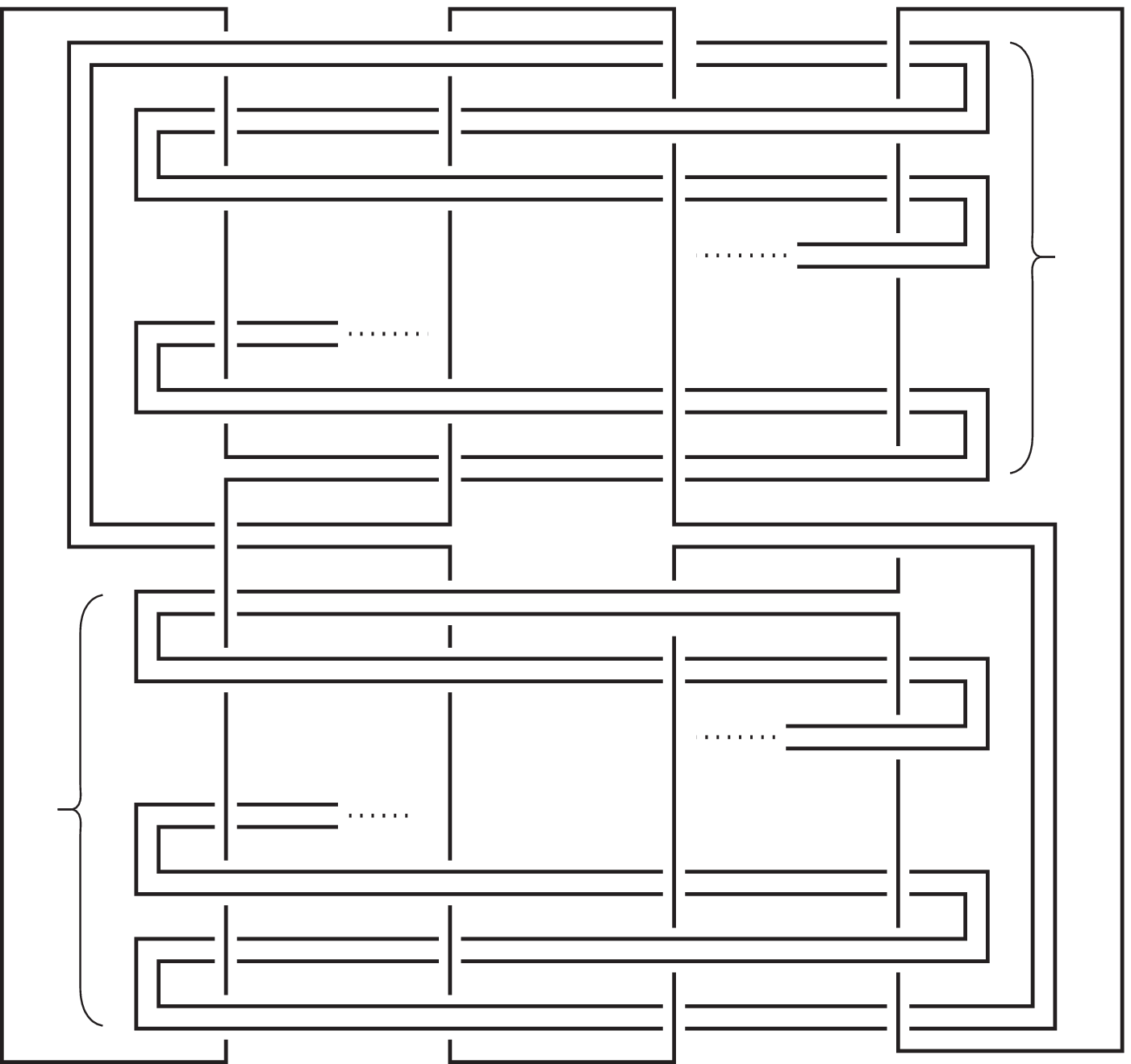}
  \caption{The knot $J_q$ $(q > 0)$.}
  \label{figure:Jplus}
\end{figure}

\begin{figure}[H]
  \labellist
  \small\hair 0mm
  \pinlabel {$|q|$} at 14 93
  \pinlabel {$|q|$} at 14 263
  \pinlabel {$|q|$} at 387 117
  \pinlabel {$|q|$} at 387 287     
  \endlabellist
  \includegraphics[scale=0.67]{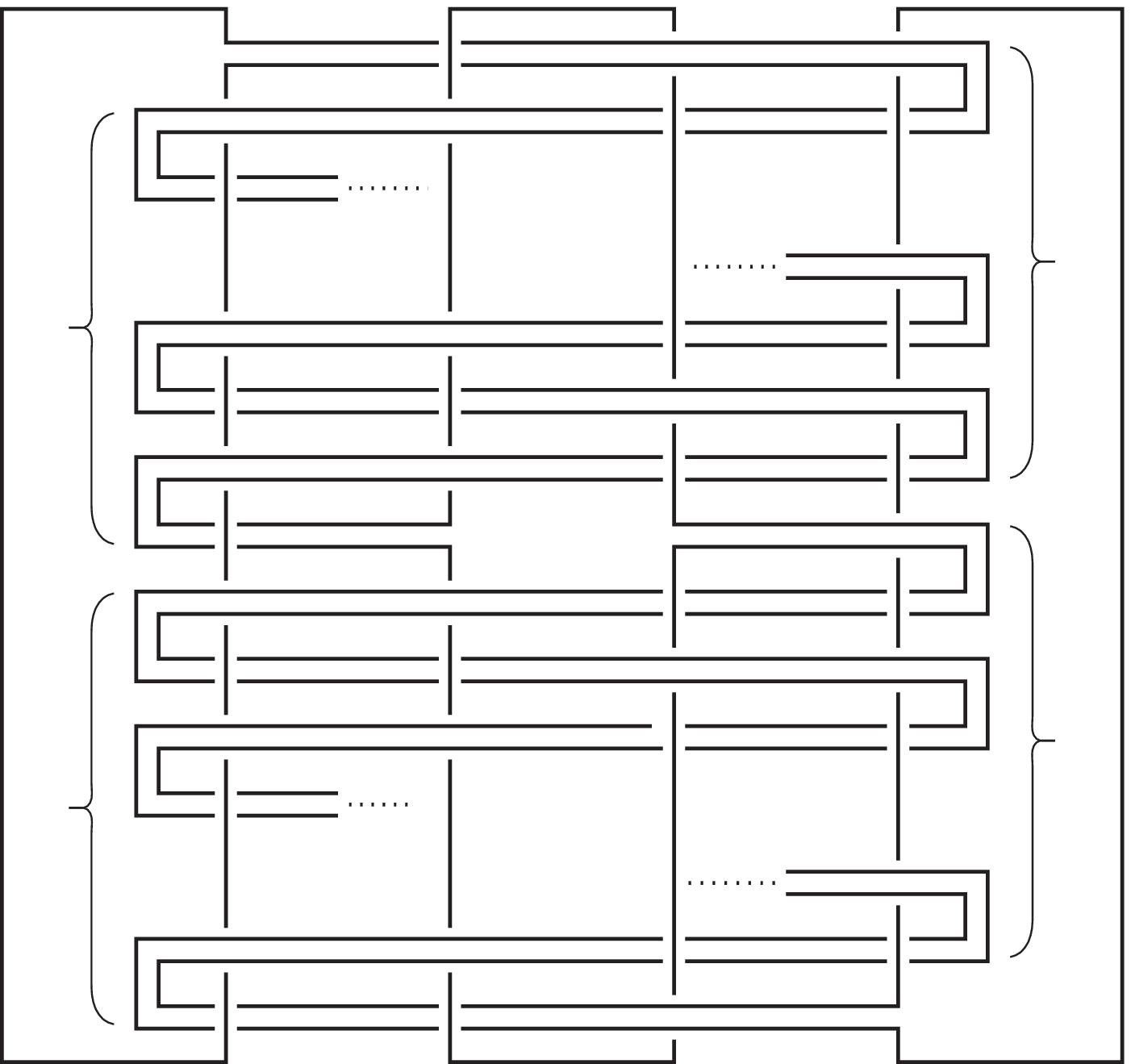}
  \caption{The knot $J_q$ $(q < 0)$.}
  \label{figure:Jminus}
\end{figure}

\section{Twisted Alexander polynomial and nonexistence of meridional epimorphisms}
\label{section:twisted-alexander-polynomial}

In this section we show that two of the examples described in
Section~\ref{section:construction-non-meridional-epi} do not admit any
meridional epimorphisms.  For this purpose we use twisted Alexander
polynomials, as discussed below.

\subsection{Obstructions to admitting a meridional epimorphism}
\label{subsection:obstruction-to-meridional-epi}

We recall the definition of the twisted Alexander polynomial following
Wada~\cite{Wada:1994-1}.  For this purpose, temporarily, we assume
that a knot is oriented.  For brevity, although any presentation of
a knot group can be used in general, we will consider only the special
case of a deficiency one presentation
\[
G=\langle x_1,\ldots,x_k \mid r_1,\ldots,r_{k-1}\rangle
\]
of a knot group $G=G(K)$ in which each generator $x_i$ represent a
positively oriented meridian.  For instance a Wirtinger presentation
can be used.  Suppose $\rho\colon G \to \SL(n,\mathbb F)$ is a
representation over a field $\mathbb F$.  Let $\alpha\colon G\to
\Z=\langle t\rangle$ be the surjection defined by $\alpha(x_i)=t$.
The tensor representation $\rho\otimes \alpha$ defined by
$(\rho\otimes\alpha)(g)=\alpha(g)\cdot \rho(g)$ gives rise to a ring
homomorphism $\Z[G] \to M_n(\mathbb F[t,t^{-1}])$.  Let $F\langle
x_i\rangle$ be the free group generated by the symbols
$x_1,\ldots,x_k$, and let $\Phi$ be the composition
\[
\Phi\colon\Z[F\langle x_i\rangle]\xrightarrow{\text{proj.}} \Z[G]
\xrightarrow{\rho\otimes\alpha} M_n(\mathbb F[t,t^{-1}]).
\]
Viewing $r_i$ as an element of $F\langle x_i \rangle$, the Fox
derivative $\frac{\partial r_i}{\partial x_j} \in \Z[F\langle
x_i\rangle]$ is defined as in~\cite{Fox:1953-1}.  The matrix
consisting of $(k-1)\times k$ blocks of size $n\times n$
\[
\Phi\Big(\frac{\partial r_i}{\partial x_j}\Big) \in M_n(\mathbb
F[t,t^{-1}]),\quad 1\le i\le k-1,\; 1\le j\le k
\]
is called a \emph{twisted Alexander matrix}.  It can be verified that
$\det\Phi(1-x_j)\ne 0$ for some~$j$ (see \cite[Lemma~2]{Wada:1994-1}).
For such an index $j$, let $M_j$ be the matrix obtained from the twisted
Alexander matrix by deleting the blocks on the $j$th column.  Viewing
$M_j$ as an $n(k-1)\times n(k-1)$ matrix over $\mathbb F[t,t^{-1}]$,
we define the \emph{twisted Alexander polynomial} for $(K,\rho)$ by
$\Delta_{K,\rho} = \Delta^N_{K,\rho}/\Delta^D_{K,\rho}$ where
\[
\Delta^N_{K,\rho} = \det(M_j),\quad \Delta^D_{K,\rho} = \det\Phi(x_j-1).
\]
We call $\Delta^N_{K,\rho}$ and $\Delta^D_{K,\rho}$ the
\emph{numerator} and \emph{denominator} of the twisted Alexander
polynomial.  Note that $\Delta^N_{K,\rho}$ and $\Delta^D_{K,\rho}$ are
Laurent polynomials in $\mathbb F[t,t^{-1}]$.

Under our assumption that the generators of the presentation for $G$
are positive meridians, both polynomials $\Delta^N_{K,\rho}$ and
$\Delta^D_{K,\rho}$ are well-defined invariants of $(K,\rho)$, up to
multiplication by units in $\mathbb F[t,t^{-1}]$.  It is a consequence
of the following two facts: (i)~the fraction $\Delta_{K,\rho}$ is
well-defined up to units for any choice of a presentation of $G$
(e.g.\ see \cite{Wada:1994-1}), and
(ii)~$\Delta^D_{K,\rho}=\det\Phi(x_j-1)$ is determined, up to units,
by the conjugacy class of the generator~$x_j$.

\begin{remark}
  By the same argument, the following more general statement is true:
  for a finitely presentable group $G$ and a conjugacy class $c$, the
  numerator and denominator of the twisted Alexander polynomial of $G$
  are invariants of $(G,c)$, provided that we use a presentation whose
  generators are in the conjugacy class~$c$.
\end{remark}

\begin{remark}
  For an unoriented knot $K$, $\Delta^N_{K,\rho}$ and
  $\Delta^D_{K,\rho}$ are well-defined up to the substitution $t\to
  t^{-1}$ (and up to multiplication by a unit).  For our purpose, it
  does not cause any problem.
\end{remark}

The following is the key ingredient we use to detect the non-existence
of a meridional epimorphism.

\begin{theorem}[Kitano-Suzuki-Wada~\cite{Kitano-Suzuki-Wada:2005-1,
    Kitano-Suzuki-Wada:2011-1}]
  \label{theorem:detecting-nonexistence-of-meridional-epi}
  Suppose there is a meridional epimorphism $G(K)\to G(K')$.  Then for
  any representation $\rho'\colon G(K') \to \SL(n,\mathbb F)$ over a
  field $\mathbb F$, there is a representation $\rho\colon G(K) \to
  \SL(n,\mathbb F)$ such that for some $\epsilon\in \{1,-1\}$,
  $\Delta^N_{K',\rho'}(t)$ divides $\Delta^N_{K,\rho}(t^\epsilon)$ in
  $\mathbb F[t,t^{-1}]$ and $\Delta^D_{K',\rho'}(t)$ is equal to
  $\Delta^D_{K,\rho}(t^\epsilon)$ up to units.
\end{theorem}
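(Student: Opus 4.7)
The plan is to set $\rho := \rho'\circ\varphi$ for the assumed meridional epimorphism $\varphi\colon G(K)\to G(K')$, and then to derive the two polynomial relations by comparing twisted Alexander matrices built from meridional presentations of $G(K)$ and $G(K')$ that are linked through $\varphi$. Because $\varphi$ is meridional and knots are unoriented, there is a well-defined sign $\epsilon\in\{\pm 1\}$ such that $\varphi$ carries the conjugacy class of a positively oriented meridian of $K$ into that of $y^\epsilon$ for any meridian $y$ of $K'$; this is the $\epsilon$ that will appear in the conclusion.

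The denominator identity is a short formal calculation. Fix a Wirtinger presentation $\langle x_1,\ldots,x_k\mid r_1,\ldots,r_{k-1}\rangle$ of $G(K)$ and an index $j$ with $\Delta^D_{K,\rho}(t) = \det(t\,\rho(x_j) - I)$. Since $\rho(x_j) = \rho'(\varphi(x_j))$ and $\varphi(x_j)$ is conjugate in $G(K')$ to $y^\epsilon$ for some meridian $y$, conjugation invariance of the determinant and the substitution $s = t^\epsilon$ (using $\det\rho'(y)\in\mathbb{F}^\times$) give $\Delta^D_{K,\rho}(t) = \Delta^D_{K',\rho'}(t^\epsilon)$ up to units, which is exactly the required denominator relation.

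For the numerator I would use $\varphi$ to produce a presentation of $G(K')$ on the generators $z_i := \varphi(x_i)$. These generate $G(K')$ and all lie in the $\epsilon$th power of the meridional class, and the kernel of the induced surjection $F_k\twoheadrightarrow G(K')$ sending $x_i\mapsto z_i$ contains $\ker(F_k\twoheadrightarrow G(K)) = \langle\!\langle r_1,\ldots,r_{k-1}\rangle\!\rangle$. Hence
\[
G(K') = \langle z_1,\ldots,z_k \mid r_1,\ldots,r_{k-1}, R_1,\ldots,R_\ell\rangle,
\]
where the $R_i$ are lifts to $F_k$ of normal generators of $\ker\varphi$. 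Reducing this via Tietze transformations to a deficiency-one meridional presentation and invoking Wada's invariance theorem~\cite{Wada:1994-1} together with the remark in Section~\ref{subsection:obstruction-to-meridional-epi}, the resulting twisted Alexander fraction equals $\Delta^N_{K',\rho'}(t^\epsilon)/\Delta^D_{K',\rho'}(t^\epsilon)$ up to units. The key observation is that the ring homomorphism $\Phi$ attached to the primed presentation (which factors through $\rho'\circ\varphi = \rho$ and through the exponent-sum abelianization) coincides with the $\Phi$ used for $G(K)$; consequently the Fox-derivative block-rows of the primed Alexander matrix arising from $r_1,\ldots,r_{k-1}$ are literally the rows of the Alexander matrix of $(K,\rho)$. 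A minor-expansion argument on the resulting block matrix (after deleting the $j$th block column), combined with the denominator identity above, then yields the divisibility $\Delta^N_{K',\rho'}(t)\mid\Delta^N_{K,\rho}(t^\epsilon)$.

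The main obstacle is the Tietze reduction of the $(k-1+\ell)$-relator presentation above to a deficiency-one presentation of $G(K')$ on the generators $z_i$, and the precise bookkeeping showing that the Alexander matrix of the reduced presentation still contains the Alexander matrix of $(K,\rho)$ as a block up to the substitution $t\mapsto t^\epsilon$. Wada's invariance of the twisted Alexander fraction under Tietze moves is the essential technical input, and the meridional hypothesis is exactly what pins the denominator to the common conjugacy class of meridians, so that the divisibility of the full fractions collapses to the claimed divisibility on the numerator alone.
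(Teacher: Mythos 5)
A preliminary remark: the paper does not prove Theorem~\ref{theorem:detecting-nonexistence-of-meridional-epi} at all — it is quoted from \cite{Kitano-Suzuki-Wada:2005-1, Kitano-Suzuki-Wada:2011-1}, and the paper only records afterwards that $\rho=\rho'\circ\varphi$. So your proposal can only be measured against the standard argument of those references, and in its setup it follows that argument: taking $\rho=\rho'\circ\varphi$, extracting the common sign $\epsilon$ from the fact that all Wirtinger generators of $G(K)$ map into a single meridional conjugacy class of $G(K')$, proving the denominator identity by conjugation invariance and $\det\rho'(y)=1$, and presenting $G(K')$ on the generators $\varphi(x_i)$ with the relators $r_1,\dots,r_{k-1}$ of $K$ plus finitely many extra relators $R_1,\dots,R_\ell$. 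All of that is correct, as is the observation that on the rows coming from the $r_i$ the map $\Phi$ for this presentation agrees with the one used for $(K,\rho)$ (up to $t\mapsto t^\epsilon$, depending on which surjection to $\Z$ you fix).

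The gap is in how you finish the numerator. You propose to Tietze-reduce $\langle z_1,\dots,z_k\mid r_1,\dots,r_{k-1},R_1,\dots,R_\ell\rangle$ to a deficiency-one meridional presentation and argue that the Alexander matrix of the reduced presentation ``still contains the Alexander matrix of $(K,\rho)$ as a block,'' finishing by a minor expansion; you yourself flag this bookkeeping as the unresolved obstacle. That mechanism does not work as described: the $R_i$ are not consequences of the $r_i$ (otherwise $\varphi$ would be injective), so they cannot simply be discarded, and genuine Tietze moves replace rows by $\Z[F]$-linear combinations and conjugates, after which the rows of the $(K,\rho)$-matrix need not survive literally. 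The standard (and much shorter) finish avoids any reduction: apply Wada's definition \cite{Wada:1994-1} directly to the presentation with $k-1+\ell$ relators, where the numerator is the greatest common divisor of the determinants of all $(k-1)n\times(k-1)n$ minors of the matrix with the $j$th block column deleted. Since all generators $z_i$ lie in one meridional conjugacy class, the Remark in Section~\ref{subsection:obstruction-to-meridional-epi} (together with the unoriented-knot remark) identifies this gcd, up to units, with $\Delta^N_{K',\rho'}(t^{\epsilon})$; and a gcd divides each individual minor, in particular the one formed by the rows of $r_1,\dots,r_{k-1}$, which is exactly $\Delta^N_{K,\rho}$ up to the substitution $t\mapsto t^{\epsilon}$. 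This gives the numerator divisibility at once, with the same $\epsilon$ as in your denominator identity; with this replacement of your final step, your outline becomes the proof of Kitano--Suzuki--Wada.
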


\begin{remark}\leavevmode\Nopagebreak
  \begin{enumerate}
  \item The representation $\rho$ in
    Theorem~\ref{theorem:detecting-nonexistence-of-meridional-epi} is
    the composition of the given meridional epimorphism and~$\rho'$.
    Since our aim is to detect the non-existence of a meridional
    epimorphism, we do not know what $\rho$ would be.  Consequently,
    to use
    Theorem~\ref{theorem:detecting-nonexistence-of-meridional-epi}.
    we need to investigate all representations $\rho\colon G(K)\to
    \SL(n,\mathbb{F})$ such that $\Im \rho = \Im \rho'$ and
    $\rho(\text{meridian of }K)$ is conjugate to $\rho(\text{meridian
      of }K')$ in~$\Im \rho'$.
  \item Under the weaker assumption that there is an epimorphism
    $G(K)\to G(K')$, which might be non-meridional, we have an weaker
    conclusion that $\Delta_{K',\rho'}(t)$
    divides~$\Delta_{K,\rho}(t^\epsilon)$.
    See~\cite{Kitano-Suzuki-Wada:2005-1, Kitano-Suzuki-Wada:2011-1}.
  \end{enumerate}
\end{remark}

\subsection{Computation for the first example on the trefoil}

In this subsection we show that there is no meridional epimorphism of
the knot group $G(K_T)$ onto the trefoil group $G(T)$, where
$K_T$ is the knot presented in Figure~\ref{figure:example-K_T}.

We will use representations over $\SL(2,\mathbb F_5)$.  In particular,
consider a representation
\[
\rho'\colon G(T) = \langle y_1,y_2 \mid y_1y_2y_1 = y_2y_1y_2
\rangle \to \SL(2,\mathbb F_5)
\]
defined by
\[
  \rho'(y_1) = \begin{bmatrix}0 & 4 \\ 1 & 3\end{bmatrix}, \quad
  \rho'(y_2) = \begin{bmatrix}0 & 1 \\ 4 & 3\end{bmatrix}.
\]
It is straightforward to verify that the relator is sent to the
identity.

A computer-aided computation shows that
\[
\Delta^N_{E,\rho'} = t^4 + 2 t^3 + 2 t^2 + 2 t + 1 , \quad 
\Delta^D_{E,\rho'} = t^2 + 2 t + 1.
\]
Note that both $\Delta^N_{T,\rho'}$ and $\Delta^D_{T,\rho'}$ are
symmetric.

In order to invoke
Theorem~\ref{theorem:detecting-nonexistence-of-meridional-epi}, we
compute the twisted Alexander polynomials of~$K_T$.  We are again
aided by computer programs written by the authors, which enumerates
all the representations $\rho\colon G(K_T) \to \SL(2,\mathbb F_5)$ (up
to conjugation) and then computes the associated twisted Alexander
polynomials.  By this we obtain that there are exactly eight distinct
twisted Alexander polynomials of $G(K_T)$ over $\SL(2,\mathbb F_5)$.
These polynomials are listed in
Appendix~\ref{subsection:polynomials-K_T-SL(2,F_5)},
Table~\ref{table:polynomials-of-K_T}.

From Table~\ref{table:polynomials-of-K_T}, it is straightforward to
verify that for any representation $\rho\colon G(K_T) \to
\SL(2,\mathbb F_5)$, $\Delta^D_{K_T,\rho} \ne \Delta^D_{T,\rho'}$ or
$\Delta^N_{K_T,\rho}$ does not divide $\Delta^N_{T,\rho'}$.  By
symmetry of the polynomials, the conclusion holds for
$(\Delta^N_{T,\rho'}(t^{-1}), \Delta^D_{T,\rho'}(t^{-1}))$ as well.
By Theorem~\ref{theorem:detecting-nonexistence-of-meridional-epi}, it
follows that there is no meridional epimorphism of $G(K_T)$
onto~$G(T)$.

\subsection{Computation for $J_{-1}$ and the figure eight knot}

In this subsection we show that there is no meridional epimorphism of
the knot group $G(J_{-1})$ onto the figure eight group $G(E)$, where
$J_{-1}$ is the knot described in
Section~\ref{subsection:normal-generator-johnson-method}.

For this case, we use representations over $\SL(2,\mathbb F_7)$.  Let
\[
\rho'\colon G(E) = \langle y_1,y_2 \mid 
\bar y_1 \bar y_2 y_1 y_2 \bar y_1 y_2 y_1 \bar y_2 \bar y_1 y_2
\rangle \to \SL(2,\mathbb F_7)
\]
be the representation defined by
\[
  \rho'(y_1) = \begin{bmatrix}0 & 4 \\ 5 & 2\end{bmatrix}, \quad
  \rho'(y_2) = \begin{bmatrix}1 & 0 \\ 3 & 1\end{bmatrix}.
\]
The relator of $G(E)$ is sent to the identity, and we have
\[
\Delta^N_{T,\rho'} = t^4 + t^3 + 3 t^2 + t + 1 , \quad 
\Delta^D_{T,\rho'} = t^2 + 5 t + 1.
\]

By computation aided by a computer, we obtain that there are exactly
139 representations up to conjugacy, and 58 distinct twisted Alexander
polynomials of $G(J_{-1})$ over $\SL(2,\mathbb F_7)$.  We list them in
Table~\ref{table:polynomials-of-J_-1} in
Appendix~\ref{subsection:polynomials-J_-1-SL(2,F_7)}.  From
Table~\ref{table:polynomials-of-J_-1} it is verified that for any
representation $\rho\colon G(J_{-1}) \to \SL(2,\mathbb F_7)$, either
$\Delta^D_{J_{-1},\rho} \ne \Delta^D_{E,\rho'}$ or
$\Delta^N_{J_{-1},\rho}$ does not divide~$\Delta^N_{E,\rho'}$.  Since
both $\Delta^D_{E,\rho'}(t)$ and $\Delta^N_{E,\rho'}(t)$ are
symmetric, the conclusion holds for $(\Delta^N_{E,\rho'}(t^{-1}),
\Delta^D_{E,\rho'}(t^{-1}))$ as well.  By
Theorem~\ref{theorem:detecting-nonexistence-of-meridional-epi}, it
follows that there is no meridional epimorphism of $G(J_{-1})$
onto~$G(E)$.

\begin{remark}
  The most time-consuming step of the computation is to find all the
  representations of the given knot group.  Our implementation
  performs a brute-force search; since its running time is exponential
  to the number of the generators, it would be intereseting if a more
  clever algorithm is available.  For $G(K_T)$, there is a
  presentation with 3 generators, and all the 37 $\SL(2,\mathbb{F}_5)$
  representations (up to conjugacy) are found within a few seconds.
  For $G(J_{-1})$, we use a simplified presentation with 5 generators,
  and all the 139 $\SL(2,\mathbb{F}_7)$ representations (up to
  conjugacy) are found within 2 minutes.  For $G(J_q)$ for $q\ge 1$ or
  $q<-1$, we could not derive any conclusion within resonable running
  time; computation for two weeks on a computer with a 3GHz Intel i7
  processor was not enough.
\end{remark}

\section{Satellite construction and knot group epimorphisms}
\label{section:satellite-construction}

In this section we present a method to obtain infinitely many pairs of
knots $(K,K')$ for which $G(K')$ is a non-meridional homomorph of
$G(K)$ but there is no meridional epimorphism $G(K)\to G(K')$.  We
will start with a given ``seed'' pair $(K,K')$ of knots with the
desired property, and then apply certain satellite constructions to
produce infinitely many such examples.

\subsection{Satellite construction and knot group epimorphisms}

We begin by recalling the standard satellite construction.  Let $K$ be
a knot in $S^3$, and $\alpha$ is an unknotted oriented embedded circle
in $S^3$ disjoint to~$K$.  Glue the exterior $E_\alpha$ of
$\alpha\subset S^3$ and the exterior $E_J$ of another knot $J$ in
$S^3$ along an orientation reversing diffeomorphism between the
boundary tori, which identifies a zero-linking longitude and
positively oriented meridian of $\alpha$ with a positively oriented
meridian and zero-linking longitude of~$J$, respectively.  There is a
diffeomorphism of the resulting 3-manifold onto $S^3$, and the image
of $K$ under the diffeomorphism is a new knot, which we denote by
$K(\alpha,J)$.  In traditional terminology, $J$ is the
\emph{companion} and $K$ viewed as a knot in the solid torus
$E_\alpha$ is the \emph{pattern}.  As illustrated in
Figure~\ref{figure:satellite-example}, $K(\alpha,J)$ is the knot
obtained by ``tying'' $J$ into $K$ along a 2-disk bounded by~$\alpha$.

\begin{figure}[H]
  \labellist
  \pinlabel {\LARGE$R$} at 27 60
  \pinlabel {\LARGE$R$} at 212 60
  \pinlabel {$\alpha$} at 120 72
  \pinlabel {$J$} at 280 59
  \endlabellist
  \includegraphics{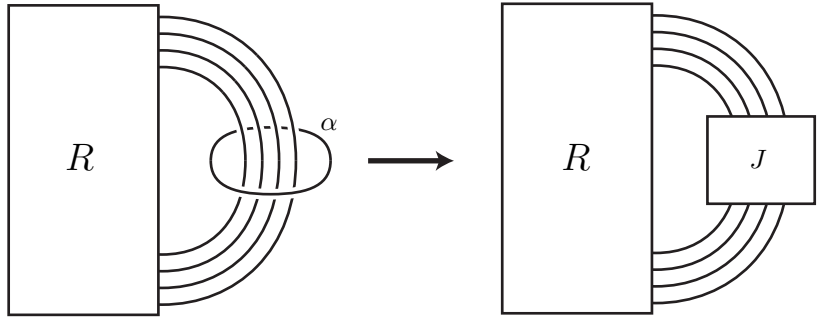}
  \caption{A satellite construction.}
  \label{figure:satellite-example}
\end{figure}

Note that $E_{K\sqcup\alpha}$ is a subspace of $E_{K(\alpha,J)}$ and a
subspace of~$E_K$.  Let $i_*$ and $j_*$ be the inclusion-induced
homomorphisms of $G(K\sqcup\alpha)$ into $G(K(\alpha,J))$ and $G(K)$,
respectively.  Then we have the following folklore:

\begin{lemma}
  \label{lemma:satellite-epimorphism}
  There exists a meridional epimorphism $q\colon G(K(\alpha,J)) \to G(K)$
  satisfying $i_*= q\circ j_*$.
  \[
  \begin{diagram}
    \node[2]{G(K(\alpha,J))}\arrow[2]{s,r}{q}
    \\
    \node{G(K\cup\alpha)}\arrow{ne,t}{j_*}\arrow{se,b}{i_*}
    \\
    \node[2]{G(K)}
  \end{diagram}
  \]
\end{lemma}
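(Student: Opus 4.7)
The plan is to build $q$ by gluing together familiar ingredients using van Kampen's theorem, so that the entire construction is driven by the universal property of an amalgamated free product. Let $T$ denote the satellite gluing torus, i.e., $T=\partial N(\alpha)$ inside $E_\alpha$ and $\partial N(J)$ inside $E_J$. By construction,
\[
E_{K(\alpha,J)} = E_{K\sqcup\alpha}\cup_T E_J,\qquad
E_K = E_{K\sqcup\alpha}\cup_T V,
\]
where $V\cong S^1\times D^2$ is the solid torus filling~$\alpha$. First I would apply van Kampen to both decompositions to obtain
\[
G(K(\alpha,J)) \cong G(K\sqcup\alpha) *_{\pi_1(T)} G(J),\qquad
G(K) \cong G(K\sqcup\alpha) *_{\pi_1(T)} \pi_1(V),
\]
and then define $q$ via the universal property, taking the identity on the factor $G(K\sqcup\alpha)$ and the abelianization $G(J)\twoheadrightarrow H_1(E_J)\cong\Z\cong\pi_1(V)$ on the factor $G(J)$.

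The crucial step is to check that these two maps agree on the amalgamating subgroup $\pi_1(T)\cong\Z^2$. The satellite gluing identifies the meridian $\mu_J$ of $J$ with the longitude $\lambda_\alpha$ of $\alpha$ and the longitude $\lambda_J$ of $J$ with the meridian $\mu_\alpha$ of $\alpha$. Under the abelianization of $G(J)$ one has $\mu_J\mapsto 1$ and $\lambda_J\mapsto 0$, while inside $V$ the longitude $\lambda_\alpha$ is isotopic to the core of $V$ and hence generates $\pi_1(V)$, whereas $\mu_\alpha$ bounds a meridian disk and is therefore trivial. The two assignments agree, so $q$ is well-defined.

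It then remains to verify the three claimed properties. Surjectivity is immediate, since the image of $q$ contains the identity copy of $G(K\sqcup\alpha)$ together with a generator of the $\pi_1(V)$ factor, and these together generate $G(K)$. The identity $i_*=q\circ j_*$ is built into the construction: $q$ restricts to the identity on the image of $G(K\sqcup\alpha)$ inside $G(K(\alpha,J))$, followed by the inclusion into $G(K)$. Finally, $q$ is meridional, because a meridian of $K(\alpha,J)$ can be chosen to lie on $\partial N(K)\subset E_{K\sqcup\alpha}$, and such a curve simultaneously represents a meridian of $K$ in $G(K\sqcup\alpha)$, which $q$ fixes.

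The only delicate point, and where I would focus care, is the meridian/longitude bookkeeping on the gluing torus, so that the abelianization $G(J)\to\Z$ matches the inclusion $\pi_1(T)\to\pi_1(V)$ on the nose. Once that identification is confirmed, the existence, surjectivity, and meridional property of $q$ all follow formally from the universal property and present no real obstacle.
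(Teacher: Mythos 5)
Your argument is correct, but it is not the route the paper takes for this lemma. The paper's proof is geometric: it takes a degree one map $E_J\to S^1\times D^2$ (the trivial knot exterior) extending the boundary diffeomorphism that matches meridian and zero-linking longitude of $J$ with those of the unknot, glues it with the identity on $E_{K\cup\alpha}$, and obtains a degree one map $E_{K(\alpha,J)}\to E_K$; surjectivity of $q$ then comes from the standard fact that degree one maps induce $\pi_1$-epimorphisms, and meridionality and $i_*=q\circ j_*$ hold because the map is the identity on $E_{K\cup\alpha}$, hence near $\partial N(K)$. You instead work purely algebraically: van Kampen for the two decompositions $E_{K(\alpha,J)}=E_{K\sqcup\alpha}\cup_T E_J$ and $E_K=E_{K\sqcup\alpha}\cup_T V$, definition of $q$ by the universal property of the pushout (identity on $G(K\sqcup\alpha)$, abelianization $G(J)\to\Z\cong\pi_1(V)$ on the other factor), and a careful check on the gluing torus that $\mu_J=\lambda_\alpha\mapsto$ generator and $\lambda_J=\mu_\alpha\mapsto 0$ on both sides --- which is exactly the delicate point, and you handle it correctly; surjectivity, the commuting triangle, and meridionality then follow as you say (your phrase ``$q$ restricts to the identity on the image of $G(K\sqcup\alpha)$'' should more precisely read $q\circ j_*=i_*$). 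The trade-off: the paper's degree-one map carries extra geometric content (the authors explicitly connect meridional epimorphisms to degree one maps of exteriors) and gets surjectivity for free, while your argument is more elementary and self-contained; interestingly, it is essentially the algebraic description $G(K\cup\alpha)\amalg_{\Z^2}G(J)\to G(K\cup\alpha)\amalg_{\Z^2}\Z$ that the paper itself states without proof later, in the proof of Theorem~\ref{theorem:satellite-meridional-epimorphisms}, so your proof also substantiates that step.
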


For the reader's convenience, we give a proof.

\begin{proof}
  There is a degree one map $E_J$ to the trivial knot exterior
  $S^1\times D^2$ which extends a diffeomorphism on the boundary
  sending the meridian and zero-linking longitude of $J$ to those of
  the unknot.  Glueing it with the identity map on $E_{K\cup\alpha}$,
  one obtains a degree one map $E_{K(\alpha,J)} \to E_K$ which induces
  the desired~$q$.
\end{proof}

We recall some standard definitions.  The \emph{derived subgroups} of
a group $G$ is defined inductively by $G^{(0)}=G$,
$G^{(n+1)}=[G^{(n)}, G^{(n)}]$ where $[A,B]$ denotes the subgroup
generated by commutators $\{aba^{-1}b^{-1}\mid a\in A,\,b\in B\}$.
For the first infinite ordinal $\omega$, the transfinite derived
subgroup $G^{(\omega)}$ is defined by $G^{(\omega)}
=\bigcap_{n<\infty} G^{(n)}$.  A group $G$ is \emph{residually
  solvable} if $G^{(\omega)} = \{e\}$.

\begin{remark}
  \leavevmode\Nopagebreak
  \begin{enumerate}
  \item The following well-known examples will be useful for our
    purpose: if $K$ is a fibered knot (or link), then $G(K)$ is
    residually solvable.  For, the commutator subgroup
    $G(K)^{(1)}=[G(K),G(K)]$ is the fundamental group of a surface
    with nonempty boundary, and thus a free group.  It is known that a
    free group is residually solvable.
  \item Not all knot groups are residually solvable.  For example, the
    group of a nontrivial knot $K$ with Alexander polynomial one is
    not residually solvable.
  \end{enumerate}
\end{remark}

The key technical ingredient we use is the following.

\begin{theorem}
  \label{theorem:satellite-meridional-epimorphisms}
  Suppose $T$ is a knot with residually solvable group~$G(T)$.
  Suppose $J$ has Alexander polynomial one.  Then for any homomorphism
  $f\colon G(K(\alpha,J))\to G(T)$, there is an induced homomorphism
  $f'\colon G(K)\to G(T)$ that makes the following diagram commute:
  \[
  \begin{diagram}
    \node[2]{G(K(\alpha,J))}\arrow{se,t}{f}\arrow[2]{s,r}{q}
    \\
    \node{G(K\cup\alpha)}\arrow{ne,t}{j_*}\arrow{se,b}{i_*}
    \node[2]{G(T)}
    \\
    \node[2]{G(K)}\arrow{ne,b}{f'}
  \end{diagram}
  \]
  In addition, the following hold:
  \begin{enumerate}
  \item if $f$ is an epimorphism, then $f'$ is an epimorphism;
  \item if $f$ is meridional, then $f'$ is meridional.
  \end{enumerate}
\end{theorem}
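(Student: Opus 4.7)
The plan is to exploit the Seifert--van Kampen decomposition
\[
E_{K(\alpha,J)} \;=\; E_{K\cup\alpha}\cup_{T^2} E_J,
\]
which writes $G(K(\alpha,J))$ as the amalgamated product $G(K\cup\alpha) *_{\pi_1(T^2)} G(J)$, with the peripheral gluing swapping meridians and longitudes of $\alpha$ and $J$. The degree one map $E_J \to S^1\times D^2$ used to build $q$ in Lemma~\ref{lemma:satellite-epimorphism} realises the abelianisation $G(J)\to\Z$ on fundamental groups. Unwinding the pushout identifies $\Ker q$ with the normal closure in $G(K(\alpha,J))$ of $G(J)^{(1)}$: after killing $G(J)^{(1)}$ the amalgamation reduces to killing the meridian of $\alpha$ inside $G(K\cup\alpha)$, which is exactly the Dehn filling yielding $G(K)$. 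Hence producing $f'$ reduces to showing $f(G(J)^{(1)}) = \{e\}$.

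The technical heart of the argument is to extract this vanishing from the Alexander polynomial one hypothesis. The classical Alexander module of $J$ is $G(J)^{(1)}/G(J)^{(2)}$ as a $\Z[t,t^{-1}]$-module, and $\Delta_J = 1$ is equivalent to triviality of this module, so $G(J)^{(1)} = G(J)^{(2)}$. Taking commutators of both sides and iterating, $G(J)^{(1)} = G(J)^{(n)}$ for every $n\ge 1$. By naturality of the derived series, for every $n$,
\[
f\bigl(G(J)^{(1)}\bigr) \;=\; f\bigl(G(J)^{(n)}\bigr) \;\subseteq\; G(T)^{(n)},
\]
so $f(G(J)^{(1)}) \subseteq G(T)^{(\omega)} = \{e\}$ by residual solvability of $G(T)$. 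Thus $f$ vanishes on the normal closure of $G(J)^{(1)}$, and descends to the promised $f'\colon G(K)\to G(T)$ with $f=f'\circ q$.

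For (1), $q$ is surjective by Lemma~\ref{lemma:satellite-epimorphism}, so $f=f'\circ q$ is surjective if and only if $f'$ is. For (2), fix a meridian $\mu_0$ of $K$ regarded as an element of $G(K\cup\alpha)$; any meridian $\mu\in G(K)$ has the form $\mu = g\,j_*(\mu_0)\,g^{-1}$ for some $g\in G(K)$. Lifting $g=q(h)$ through the surjection $q$ and setting $\tilde\mu := h\,i_*(\mu_0)\,h^{-1}$, we obtain a meridian of $K(\alpha,J)$ with $q(\tilde\mu)=\mu$ (using $q\circ i_* = j_*$). Then $f'(\mu)=f(\tilde\mu)$ is a meridian of $T$ by the assumed meridionality of $f$.

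The main obstacle I anticipate is the first step: carefully identifying $\Ker q$ with the normal closure of $G(J)^{(1)}$ requires bookkeeping of the meridian--longitude swap in the satellite gluing, together with the (standard but essential) equivalence between $\Delta_J=1$ and the vanishing of the Alexander module. Once these structural points are in place the residual solvability hypothesis feeds in cleanly through the derived series, and the remaining verifications for surjectivity and meridionality are formal.
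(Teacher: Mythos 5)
Your proposal is correct and follows essentially the same route as the paper: decompose $G(K(\alpha,J))$ as $G(K\cup\alpha)\amalg_{\Z^2}G(J)$, use $\Delta_J=1$ to see that $G(J)^{(1)}$ is perfect, and kill its image in $G(T)$ via naturality of the derived series and residual solvability, after which surjectivity and meridionality are formal. The one point you flag as a potential obstacle---identifying $\Ker q$ with the normal closure of $G(J)^{(1)}$---is handled in the paper simply by the universal property of the pushout (the composite $G(J)\to G(T)$ factors through $G(J)/G(J)^{(1)}=\Z$, so $f$ and the factorization induce a map on $G(K)=G(K\cup\alpha)\amalg_{\Z^2}\Z$), which is the same argument in a form that avoids any explicit kernel bookkeeping.
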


The following corollary is an immediate consequence.

\begin{corollary}
  \label{corollary:satellite-non-meritional-epimorphism}
  If $T$ and $J$ are as in
  Theorem~\ref{theorem:satellite-meridional-epimorphisms} and there is
  no meridional epimorphism of $G(K)$ onto $G(T)$, then there is no
  meridional epimorphism of $G(K(\alpha,J))$ onto~$G(T)$.
\end{corollary}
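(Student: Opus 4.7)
The plan is to argue by contrapositive, using Theorem~\ref{theorem:satellite-meridional-epimorphisms} as a black box. That theorem is tailored to produce an induced map $f'\colon G(K)\to G(T)$ from any homomorphism $f\colon G(K(\alpha,J))\to G(T)$, and it records precisely the two preservation properties we need: being an epimorphism and being meridional. So the corollary should reduce, in a single step, to feeding a hypothetical meridional epimorphism into the theorem and reading off a contradiction with the hypothesis on~$K$.

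Concretely, I would suppose for contradiction that there is a meridional epimorphism $f\colon G(K(\alpha,J)) \to G(T)$. Since $T$ has residually solvable group and $J$ has Alexander polynomial one, Theorem~\ref{theorem:satellite-meridional-epimorphisms} applies and yields a homomorphism $f'\colon G(K)\to G(T)$ with $f = f'\circ q$, where $q\colon G(K(\alpha,J))\to G(K)$ is the meridional epimorphism from Lemma~\ref{lemma:satellite-epimorphism}. Clause~(1) of the theorem then upgrades $f'$ to an epimorphism, and clause~(2) makes it meridional. This contradicts the hypothesis that no meridional epimorphism $G(K)\to G(T)$ exists.

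There is essentially no obstacle to overcome here, since all the analytic content has already been absorbed into Theorem~\ref{theorem:satellite-meridional-epimorphisms}; the only minor thing worth checking is that ``meridional'' transfers correctly under composition, but this is immediate because $q$ is itself meridional and surjective, so every meridian of $K$ lifts to a meridian of $K(\alpha,J)$, and applying $f$ to that lift gives a meridian of $T$. Thus the corollary follows immediately from the theorem, and the work has already been done upstream.
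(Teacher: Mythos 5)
Your proposal is correct and is exactly the argument the paper intends: the corollary is stated as an immediate consequence of Theorem~\ref{theorem:satellite-meridional-epimorphisms}, obtained by feeding a hypothetical meridional epimorphism $f\colon G(K(\alpha,J))\to G(T)$ into the theorem and using clauses (1) and (2) to produce a meridional epimorphism $G(K)\to G(T)$, contradicting the hypothesis. Your extra check that meridionality passes through $q$ is harmless but already subsumed in clause (2) of the theorem.
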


\begin{proof}[Proof of Theorem~\ref{theorem:satellite-meridional-epimorphisms}]
  Consider the composition
  \[
  g\colon G(J) \to G(K(\alpha,J))\to G(T).
  \]
  Since $J$ has Alexander polynomial one, the commutator subgroup
  $[G(J),G(J)]$ is perfect, i.e.,
  $G(J)^{(1)}=G(J)^{(2)}=\cdots=G(J)^{(\omega)}$.  Since
  $g(G(J)^{(\omega)})\subset G(T)^{(\omega)}$ and $G(T)$ is residually
  solvable, it follows that $G(J)^{(1)} \subset \Ker g$.  Therefore
  $g$ factors through $G(J)/[G(J),G(J)]=\Z$.

  From the construction of $K(\alpha,J)$, one sees that
  $G(K(\alpha,J))$ is the amalgamated product of $G(K\cup \alpha)$ and
  $G(J)$ over $\pi_1(S^1\times S^1)=\Z^2$.  The homomorphism
  \[
  G(K(\alpha,J)) = G(K\cup\alpha)\amalg_{\Z^2} G(J) \to
  G(K\cup\alpha)\amalg_{\Z^2} \Z = G(K)
  \]
  induced by the abelianization $G(J) \to \Z$ is our $q$ in
  Lemma~\ref{lemma:satellite-epimorphism}.  From the observation in
  the previous paragraph, it follows that $f\colon G(K(\alpha,J))\to
  G(T)$ induces a homomorphism of~$G(K)$.  This proves the first
  conclusion.

  From the commutative diagram the remaining conclusions follow
  immediately.
\end{proof}

\subsection{Infinitely many examples}

\begin{theorem}
  \label{theorem:infinitely-many-examples}
  Let $T$ be the trefoil knot or the figure eight knot.  Then there
  are infinitely many prime knots $K_1,K_2,\ldots$ satisfying the
  following:
  \begin{enumerate}
  \item For each $n$ there is an epimorphism $G(K_n) \to G(T)$ but
    there is no meridional epimorphism $G(K_n)\to G(T)$.
  \item $K_n$ and $K_m$ are not equivalent for any $n\ne m$.
  \end{enumerate}
\end{theorem}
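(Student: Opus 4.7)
The plan is to apply the satellite machinery of Theorem~\ref{theorem:satellite-meridional-epimorphisms} and Corollary~\ref{corollary:satellite-non-meritional-epimorphism} to the two ``seed'' pairs constructed in Section~\ref{section:twisted-alexander-polynomial}. Write $K_0$ for the seed knot, which is $K_T$ when $T$ is the trefoil and $J_{-1}$ when $T$ is the figure eight; in either case the previous section provides a non-meridional epimorphism $G(K_0)\to G(T)$ and rules out any meridional epimorphism $G(K_0)\to G(T)$. Both the trefoil and the figure eight knot are fibered, so $G(T)$ is residually solvable by the remark preceding Theorem~\ref{theorem:satellite-meridional-epimorphisms}.

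First I would fix an unknotted circle $\alpha\subset S^3\setminus K_0$ that is geometrically essential in $E_{K_0}$, and then choose an infinite family $J_1,J_2,\ldots$ of pairwise inequivalent prime knots with Alexander polynomial one; untwisted Whitehead doubles of distinct nontrivial knots are a convenient explicit source. Set $K_n := K_0(\alpha,J_n)$. Lemma~\ref{lemma:satellite-epimorphism} supplies a meridional epimorphism $q\colon G(K_n)\to G(K_0)$; composing with the seed epimorphism $G(K_0)\to G(T)$ yields an epimorphism $G(K_n)\to G(T)$, verifying the existence half of~(1). The non-existence half follows at once from Corollary~\ref{corollary:satellite-non-meritional-epimorphism}, whose hypotheses---Alexander polynomial one for $J_n$ and residual solvability of $G(T)$---are built into the construction.

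The rest of the theorem asserts that the $K_n$ can be arranged to be prime and pairwise inequivalent; both points I would handle via the JSJ decomposition of $E_{K_n}$. If $\alpha$ is chosen so that the pattern $(E_\alpha, K_0)$ in the solid torus $E_\alpha$ is nondegenerate (neither a cable pattern nor contained in a $3$-ball, and $\alpha$ not isotopic into $\partial E_{K_0}$), then $\partial E_{J_n}\subset E_{K_n}$ is a canonical JSJ torus, the companion piece on one side is homeomorphic to $E_{J_n}$, and uniqueness of the JSJ decomposition promotes the knot type of $J_n$ to an invariant of $K_n$. Pairwise distinctness of the $J_n$ then forces pairwise distinctness of the $K_n$, giving~(2), while primeness of each $K_n$ follows from standard satellite-theoretic facts once the pattern is prime and the companion is prime.

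The step I expect to require the most care is the explicit choice of $\alpha$ in the seed exteriors $E_{K_T}$ and $E_{J_{-1}}$: one needs $\alpha$ to be unknotted in $S^3$, essential in $E_{K_0}$, and such that the resulting pattern is prime and nondegenerate. A natural candidate is a small circle that meets a Seifert surface for $K_0$ transversely in two points of opposite sign and is sufficiently entangled with $K_0$ to be non-peripheral in $E_{K_0}$; verifying these geometric properties for the specific seed diagrams is the only genuinely non-routine piece of the argument.
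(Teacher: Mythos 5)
Your proposal is correct and follows essentially the same route as the paper: satellite the seed knots $K_T$ and $J_{-1}$ along an unknotted, essential curve $\alpha$ using Alexander-polynomial-one companions, get part~(1) from Lemma~\ref{lemma:satellite-epimorphism} together with Theorem~\ref{theorem:satellite-meridional-epimorphisms} and Corollary~\ref{corollary:satellite-non-meritional-epimorphism}, and get primeness and pairwise distinctness from the JSJ decomposition. The only differences are cosmetic: the paper fixes one hyperbolic Alexander-polynomial-one knot $P$ and uses the companions $P_n=\#^n P$, distinguishing the $K_n$ by counting JSJ tori instead of varying the companion over distinct prime knots, and it discharges your deferred step about $\alpha$ by exhibiting explicit curves for which $K_T\cup\alpha$ and $J_{-1}\cup\alpha$ are hyperbolic links (hence prime, with $\alpha$ essential), as verified by SnapPy.
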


\begin{remark}
  Due to Silver and Whitten~\cite{Silver-Whitten:2006-1}, for prime
  knots there exists a meridional epimorphism if and only if there
  exists an epimorphism preserving peripheral subgroups.  Therefore
  for our $K_n$ in Theorem~\ref{theorem:infinitely-many-examples},
  there is no epimorphism $G(K_n)\to G(T)$ sending peripheral
  subgroups into peripheral subgroups.
\end{remark}

\begin{proof}
  Recall that we have constructed a prime knot $K$ which admits an
  epimorphism $G(K)\to G(T)$ but does not admit any meridional
  epimorphism $G(K)\to G(T)$; when $T$ is the trefoil, $K = K_T$ in
  Figure~\ref{figure:example-K_T}, and when $T$ is the figure eight,
  $K=J_{-1}$ in Figure~\ref{figure:Jminus}.

  Choose an embedded circle $\alpha$ in $S^3-K$ satisfying the
  following: $\alpha$ is unknotted in $S^3$, $\alpha$ does not bound a
  2-disk in~$E_K$, and $K\cup \alpha$ is a prime link.  For example,
  we may use $\alpha$ shown in
  Figures~\ref{figure:satellite-curve-K_T}
  and~\ref{figure:satellite-curve-J_-1}.  Choose a hyperbolic
  nontrivial knot $P$ with Alexander polynomial one; for example the
  knot \verb-11n_34- in KnotInfo~\cite{Cha-Livingston:KnotInfo} can be
  used as~$P$.  Let $P_n$ be the connected sum of $n$ copies of~$P$.
  Let $K_n=K(\alpha,P_n)$.

  \begin{figure}[H]
    \labellist\small
    \pinlabel {$K_T$} at 30 20
    \pinlabel {$\alpha$} at 10 45
    \endlabellist
    \includegraphics{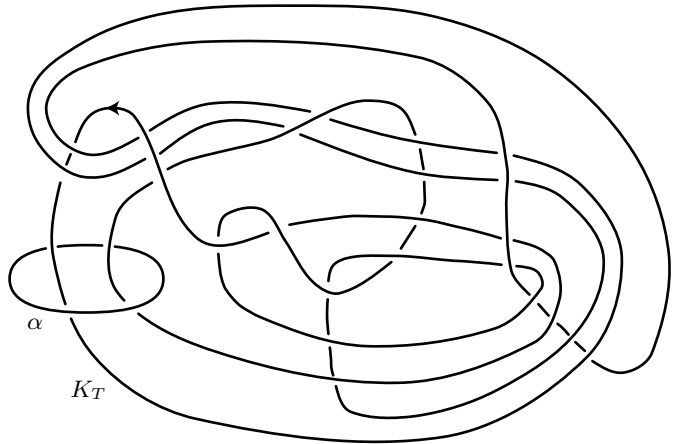}
    \label{figure:satellite-curve-K_T}
    \caption{A curve $\alpha$ for a satellite construction on~$K_T$.
      The link $K_T\cup \alpha$ is hyperbolic with volume $23.2123$,
      according to SnapPy~\cite{SnapPy}.}
  \end{figure}

  \begin{figure}[H]
    \labellist\small
    \pinlabel {$J_{-1}$} at 150 62
    \pinlabel {$\alpha$} at 150 35
    \pinlabel {$x_1$} at 100 146
    \endlabellist
    \includegraphics{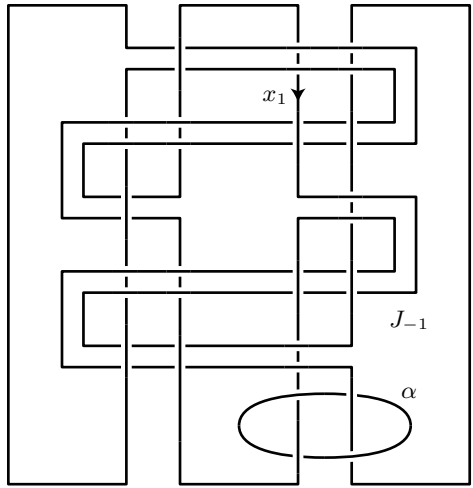}
    \label{figure:satellite-curve-J_-1}
    \caption{A curve $\alpha$ for a satellite construction
      on~$J_{-1}$.  The link $J_{-1}\cup \alpha$ is hyperbolic with
      volume $26.2914$, according to SnapPy~\cite{SnapPy}.}
  \end{figure}
  
  Note that $G(T)$ is residually solvable since $T$ is fibered.  Since
  the seed knot $K$ satisfies the conclusion (1), our $K_n$ satisfies
  (1) by Lemma~\ref{lemma:satellite-epimorphism} and
  Theorem~\ref{theorem:satellite-meridional-epimorphisms}.

  To show that the $K_n$ have distinct knot types, we consider the JSJ
  decomposition of the exterior.  Indeed in most cases satellite
  constructions with distinct companion give distinct JSJ
  decompositions.  In our case it can be seen as follows.  The
  exterior of $E_{K_n}$ is the union of $E_{K\cup \alpha}$
  and~$E_{P_n}$.  Since $P_n$ is a nontrivial knot and $\alpha$ does
  not bound a 2-disk in $E_K$, the boundary of $E_{P_n}$ is an
  incompressible torus in~$E_{K_n}$.  Since $P$ is hyperbolic, the JSJ
  tori of $E_{K_n}$ are exactly the union of $\partial E_{P_n}$ and
  the JSJ tori of $E_{K\cup\alpha}$ and~$E_{P_n}$.  In addition, since
  $P_n$ is the connected sum of $n$ copies of the hyperbolic knot $P$,
  the number of the JSJ tori of $E_{K_n}$ is monotonically increasing.
  It follows that $K_n$ and $K_m$ have non-homeomorphic exteriors if
  $n\ne m$.

  For the primality of the $K_n$, one may again look at the JSJ
  decomposition: since $K\cup \alpha$ is prime, the root piece in the
  JSJ decomposition of $E_{K_n}$ is not a composition space and
  therefore $K_n$ is not a composite knot.

  Or alternatively, one can give a direct argument as follows: suppose
  there is a 2-sphere $S$ in $S^3$ meeting $K_n$ at two points.  Let
  $\Sigma$ be the boundary of $E_{J_n} \subset E_{K_n}$ and look at
  the intersection of $S$ and $\Sigma$.  An innermost circle $C$ of
  $S\cap \Sigma$ must bound a 2-disk on $\Sigma$, since otherwise it
  would be a compressing disk of the incompressible torus~$\Sigma$.
  So we can remove the intersection circle $C$ by isotopying $S$
  in~$S^3$.  Repeating this we may assume that $S$ is disjoint
  to~$\Sigma$.  Now, $S$ can be viewed as a 2-sphere in $S^3$ which
  meets $K$ at two points and disjoint to~$\alpha$.  Let $B_1$ and
  $B_2$ be the 3-balls obtained by cutting $S^3$ along~$S$ where
  $\alpha\subset B_1$.  Since $K\cup \alpha$ is prime, $B_2\cap K$ is
  an unknotted arc in $B_2$.  This shows that $S$ does not give a
  nontrivial connected sum decomposition of~$K_n$.
\end{proof}

\appendix

\section{Tables of twisted Alexander polynomials}
\label{section:tables-of-polynomials}

\subsection{Twisted Alexander polynomials of $K_T$ over $\SL(2,\mathbb F_5)$}
\label{subsection:polynomials-K_T-SL(2,F_5)}

In this subsection we present all the twisted Alexander polynomials,
associated to representations over $\SL(2,\mathbb F_5)$, of the knot
$K_T$ shown in Figure~\ref{figure:example-K_T}.

We use a simpler presentation of $G(K_T)$ to find out all the
representations.  A straightforward simplification of the presentation
of $G(K_T)$ used in Section~\ref{subsection:first-example} shows that
$x_1$,$x_6$, and $x_{17}$ generate~$G(K_T)$.  In fact these three
generators and the following relators form a presentation of~$G(K_T)$:
\begin{gather*}
x_{1} x_{17} \bar x_{1} x_{6} (x_{17} \bar x_{6} x_{1} \bar x_{17}
\bar x_{1})^3 x_{17} \bar x_{1} \bar x_{6} (x_{1} \bar x_{17} x_{1}
x_{17} \bar x_{1} x_{6})^2 (\bar x_{17} x_{1} x_{17} \bar x_{1}
x_{6})^2 \bar x_{17} (\bar x_{6} x_{1} \bar x_{17} \bar x_{1})^2
x_{6},
\\
 x_{1} (\bar x_{17} x_{1} x_{17} \bar x_{1} x_{6})^3 \bar x_{17}
(\bar x_{6} x_{1} \bar x_{17} \bar x_{1} x_{17})^4 \bar x_{1} \bar
x_{6} x_{1} (\bar x_{17} x_{1} x_{17} \bar x_{1} x_{6})^4 (x_{17} \bar
x_{6} x_{1} \bar x_{17} \bar x_{1})^3 x_{17}.
\end{gather*}
We use computer programs to find out all the representations over
$\SL(2,\mathbb F_5)$ and to compute the associated twisted Alexander
polynomials.  It turns out that there are exactly 19 representations
modulo conjugacy, which give 8 distinct twisted Alexander polynomials.
We list the 8 polynomials in Table~\ref{table:polynomials-of-K_T}.
For each twisted Alexander polynomial in
Table~\ref{table:polynomials-of-K_T}, we describe a representation
$\rho\colon G(K_T)\to \SL(2,\mathbb F_5)$ (which is not unique in
general) which gives the polynomial by presenting the matrices
$\rho(x_1)$, $\rho(x_6)$, and~$\rho(x_{17})$.

\begin{table}[H]
  \caption{The twisted Alexander polynomials of the knot $K_T$ 
    in Figure~\ref{figure:example-K_T} over $\SL(2,\mathbb F_5)$.}
  \label{table:polynomials-of-K_T}
  \[
  \newdimen\tw \newbox\tb \setbox\tb\hbox{$\rho(x_{16})$ } \tw=\wd\tb
  \begin{array}{>{\centering$}p{\tw}<{$}@{\ }>{\centering$}p{\tw}<{$}@{\ }>{\centering$}p{\tw}<{$}cc}
    \toprule
    \rho(x_1) & \rho(x_6) & \rho(x_{17}) & \Delta^N_{K_T,\rho} & \Delta^D_{K_T,\rho} \\
    \midrule
    \sbmatrix{1&0\\0&1} & \sbmatrix{1&0\\0&1} & \sbmatrix{1&0\\0&1} & 
    t^8 + t^7 + 4 t^5 + 4 t^4 + 4 t^3 + t + 1 & t^2 + 3 t + 1 \\[.6ex]
    \sbmatrix{2&4\\4&1} & \sbmatrix{2&4\\4&1} & \sbmatrix{2&4\\4&1} & 
    t^8 + 4 t^7 + t^5 + 4 t^4 + t^3 + 4 t + 1 & t^2 + 2 t + 1 \\[.6ex]
    \sbmatrix{0&2\\2&4} & \sbmatrix{0&2\\2&4} & \sbmatrix{0&2\\2&4} & 
    t^8 + 2 t^7 + t^6 + 2 t^5 + 4 t^4 + 2 t^3 + t^2 + 2 t + 1 & t^2 + t + 1 \\[.6ex]
    \sbmatrix{0&2\\2&4} & \sbmatrix{0&4\\1&4} & \sbmatrix{1&3\\4&3} & 
    t^8 + 2 t^7 + t^5 + 3 t^4 + t^3 + 2 t + 1 & t^2 + t + 1 \\[.6ex]
    \sbmatrix{1&1\\4&0} & \sbmatrix{1&1\\4&0} & \sbmatrix{1&1\\4&0} & 
    t^8 + 3 t^7 + t^6 + 3 t^5 + 4 t^4 + 3 t^3 + t^2 + 3 t + 1 & t^2 + 4 t + 1 \\[.6ex]
    \sbmatrix{1&1\\4&0} & \sbmatrix{2&2\\1&4} & \sbmatrix{3&4\\2&3} & 
    t^8 + 3 t^7 + 4 t^5 + 3 t^4 + 4 t^3 + 3 t + 1 & t^2 + 4 t + 1 \\[.6ex]
    \sbmatrix{3&2\\0&2} & \sbmatrix{3&2\\0&2} & \sbmatrix{3&2\\0&2} & 
    t^8 + 3 t^6 + 3 t^4 + 3 t^2 + 1 & t^2 + 1 \\[.6ex]
    \sbmatrix{3&2\\0&2} & \sbmatrix{0&3\\3&0} & \sbmatrix{0&3\\3&0} & 
    t^8 + 4 t^6 + t^4 + 4 t^2 + 1 & t^2 + 1 \\[.2ex]
    \bottomrule
  \end{array}
  \]
\end{table}

\subsection{Twisted Alexander polynomials of $J_{-1}$ over $\SL(2,\mathbb F_7)$}
\label{subsection:polynomials-J_-1-SL(2,F_7)}

In this subsection we present all the twisted Alexander polynomials,
over $\SL(2,\mathbb F_7)$, of the knot $J_{-1}$ given in
Figure~\ref{figure:Jminus}.  We start with the standard Wirtinger
representation of~$J_{-1}$; the generators are labeled by $x_1, x_2,
\ldots, x_{32}$, starting from the arc with an arrow in
Figure~\ref{figure:Jminus}.  By simplifying the Wirtinger
presentation, we obtain a deficiency one presentation of $G(J_{-1})$
whose generators are the Wirtinger generators $x_2, x_{10}, x_{19},
x_{22}, x_{30}$, and relators are the following 4 words:
\begin{gather*}
  x_{19} \bar x_{30} \bar x_{10} x_{30} \bar x_{19} x_{10} \bar x_{2}
  x_{30} x_{2} \bar x_{30} x_{2} \bar x_{10} x_{19} \bar x_{30} x_{10}
  x_{30} \bar x_{19} x_{10} \bar x_{2} \bar x_{30} x_{2} \bar x_{10},
  \\
  x_{30} x_{19} \bar x_{30} \bar x_{10} x_{30} \bar x_{19} x_{22}
  (x_{30} \bar x_{19})^2 (\bar x_{30} x_{19})^2 \bar x_{30} \bar
  x_{22} x_{19} \bar x_{30} x_{10} x_{30} \bar x_{19} \bar x_{30}
  x_{2} \bar x_{10} x_{22} x_{10} \bar x_{2},
  \\
  \begin{split}
    \bar x_{30} x_{2} \bar x_{10} x_{19} \bar x_{30} x_{10} x_{30}
    \bar x_{19} \bar x_{30} x_{2} \bar x_{10} x_{22} x_{10} x_{2} \bar
    x_{10} \bar x_{22}
    \\
    \hbox{}\cdot (x_{10} \bar x_{2} x_{30} x_{19} \bar x_{30} \bar
    x_{10} x_{30} \bar x_{19})^2 x_{10} \bar x_{2} x_{30} \bar x_{10},
  \end{split}
  \\
  \begin{split}
    x_{19} \bar x_{30} \bar x_{22} x_{19} \bar x_{30} x_{10} x_{2}
    \bar x_{10} \bar x_{22} x_{10} \bar x_{2} x_{30} x_{19} \bar
    x_{30} \bar x_{10} (x_{30} \bar x_{19} x_{22})^2 x_{19} \bar
    x_{30} \bar x_{22} x_{19} \bar x_{30} x_{10} x_{30}
    \\
    \qquad \hbox{}\cdot \bar x_{19} \bar x_{30} x_{2} \bar x_{10}
    x_{22} x_{10} \bar x_{2} \bar x_{10} x_{30} \bar x_{19} x_{22}
    (x_{30} \bar x_{19})^2 \bar x_{30}.
  \end{split}
\end{gather*}
Again, we use computer programs which enumerate all the
representations of this presentation over $\SL(2,\mathbb F_7)$ and
compute the associated twisted Alexander polynomials.  There are total
139 representations up to conjugation, from which 58 distinct twisted
Alexander polynomials are obtained.  The polynomials are listed in
Table~\ref{table:polynomials-of-J_-1}.  For each polynomial in the
tables, we also describe a representation $\rho$ (which is not unique
in general) which gives the polynomial.

{\bigskip
\def\tw{\dimen255} \newbox\tb
\setbox\tb\hbox{$\rho(x_{16})$} \tw=\wd\tb
\def\tp{\vphantom{1^{1^{1^1}}}}
\def\bp{\vphantom{1_{1_{1_1}}}}
\begin{longtable}{>{\centering$}p{\tw}<{$}@{\ }>{\centering$}p{\tw}<{$}@{\ }>{\centering$}p{\tw}<{$}@{\ }>{\centering$}p{\tw}<{$}@{\ }>{\centering$}p{\tw}<{$}>{$}c<{$}>{$}c<{$}}
  \caption{Twisted Alexander polynomials of the knot $J_{-1}$.}
  \label{table:polynomials-of-J_-1}
  \\
  \toprule
  \rho(x_2) & \rho(x_{10}) & \rho(x_{19}) & \rho(x_{22}) & \rho(x_{30}) & \Delta^N_{J_{-1},\rho} & \Delta^D_{J_{-1},\rho} \\
  \midrule
  \endfirsthead
  \toprule
  \rho(x_2) & \rho(x_{10}) & \rho(x_{19}) & \rho(x_{22}) & \rho(x_{30}) & \Delta^N_{J_{-1},\rho} & \Delta^D_{J_{-1},\rho} \\
  \midrule
  \multicolumn{7}{l}{(continued from previous page)}\\[.9ex]
  \endhead
  \multicolumn{7}{r}{(continued on next page)}\\
  \bottomrule
  \endfoot
  \bottomrule
  \endlastfoot
  \sbmatrix{0&4\\5&2} & \sbmatrix{0&4\\5&2} & \sbmatrix{1&0\\3&1} & \sbmatrix{1&2\\0&1} & \sbmatrix{2&2\\3&0} & \begin{aligned} t^8 + t^7 + 3 t^6 + 5 t^5 + t^4 \\[-.8ex] + 5 t^3 + 3 t^2 + t + 1  \bp\end{aligned} & t^2 + 5 t + 1 \\[0ex]
  \sbmatrix{5&0\\0&3} & \sbmatrix{0&6\\1&1} & \sbmatrix{5&0\\4&3} & \sbmatrix{3&0\\4&5} & \sbmatrix{6&4\\1&2} & \begin{aligned}\tp t^8 + 2 t^6 + 3 t^4 + 2 t^2 + 1 \bp\end{aligned} & t^2 + 6 t + 1 \\[0ex]
  \sbmatrix{0&6\\1&0} & \sbmatrix{0&6\\1&0} & \sbmatrix{0&6\\1&0} & \sbmatrix{0&6\\1&0} & \sbmatrix{0&6\\1&0} & \begin{aligned}\tp t^8 + 2 t^4 + 1 \bp\end{aligned} & t^2 + 1 \\[0ex]
  \sbmatrix{4&0\\0&2} & \sbmatrix{4&3\\0&2} & \sbmatrix{2&0\\5&4} & \sbmatrix{0&1\\6&6} & \sbmatrix{2&0\\1&4} & \begin{aligned}\tp t^8 + 5 t^7 + t^6 + 5 t^5 + 4 t^4 \\[-.8ex] + 5 t^3 + t^2 + 5 t + 1 \bp\end{aligned} & t^2 + t + 1 \\[0ex]
  \sbmatrix{0&6\\1&4} & \sbmatrix{1&3\\3&3} & \sbmatrix{3&1\\2&1} & \sbmatrix{0&2\\3&4} & \sbmatrix{6&3\\5&5} & \begin{aligned}\tp 2 t^8 + t^7 + 4 t^6 + t^4 + 4 t^2 \\[-.8ex] + t + 2 \bp\end{aligned} & t^2 + 3 t + 1 \\[0ex]
  \sbmatrix{0&4\\5&5} & \sbmatrix{4&2\\5&1} & \sbmatrix{6&2\\0&6} & \sbmatrix{2&4\\3&3} & \sbmatrix{6&4\\0&6} & \begin{aligned}\tp 4 t^4 + 2 t^3 + 3 t^2 + 2 t + 4 \bp\end{aligned} & t^2 + 2 t + 1 \\[0ex]
  \sbmatrix{0&4\\5&5} & \sbmatrix{6&2\\0&6} & \sbmatrix{0&2\\3&5} & \sbmatrix{2&2\\6&3} & \sbmatrix{4&1\\3&1} & \begin{aligned}\tp 4 t^8 + 5 t^7 + 3 t^6 + 3 t^5 + 2 t^4 \\[-.8ex] + 3 t^3 + 3 t^2 + 5 t + 4 \bp\end{aligned} & t^2 + 2 t + 1 \\[0ex]
  \sbmatrix{5&0\\0&3} & \sbmatrix{5&0\\0&3} & \sbmatrix{2&1\\4&6} & \sbmatrix{1&6\\1&0} & \sbmatrix{5&4\\0&3} & \begin{aligned}\tp t^8 + 6 t^7 + 5 t^6 + t^5 + 3 t^4 \\[-.8ex] + t^3 + 5 t^2 + 6 t + 1 \bp\end{aligned} & t^2 + 6 t + 1 \\[0ex]
  \sbmatrix{0&4\\5&2} & \sbmatrix{5&1\\5&4} & \sbmatrix{3&4\\6&6} & \sbmatrix{1&1\\0&1} & \sbmatrix{6&2\\5&3} & \begin{aligned}\tp t^8 + 3 t^6 + 6 t^4 + 3 t^2 + 1 \bp\end{aligned} & t^2 + 5 t + 1 \\[0ex]
  \sbmatrix{4&0\\0&2} & \sbmatrix{6&6\\1&0} & \sbmatrix{4&3\\0&2} & \sbmatrix{2&3\\0&4} & \sbmatrix{5&6\\3&1} & \begin{aligned}\tp t^8 + 2 t^6 + 3 t^4 + 2 t^2 + 1 \bp\end{aligned} & t^2 + t + 1 \\[0ex]
  \sbmatrix{4&0\\0&2} & \sbmatrix{4&0\\0&2} & \sbmatrix{2&6\\0&4} & \sbmatrix{1&3\\6&5} & \sbmatrix{2&0\\3&4} & \begin{aligned}\tp t^8 + 4 t^7 + 3 t^6 + 2 t^5 + 5 t^4 \\[-.8ex] + 2 t^3 + 3 t^2 + 4 t + 1 \bp\end{aligned} & t^2 + t + 1 \\[0ex]
  \sbmatrix{4&0\\0&2} & \sbmatrix{4&0\\0&2} & \sbmatrix{1&3\\6&5} & \sbmatrix{0&6\\1&6} & \sbmatrix{4&0\\3&2} & \begin{aligned}\tp t^8 + t^7 + 5 t^6 + 6 t^5 + 3 t^4 \\[-.8ex] + 6 t^3 + 5 t^2 + t + 1 \bp\end{aligned} & t^2 + t + 1 \\[0ex]
  \sbmatrix{0&4\\5&2} & \sbmatrix{3&1\\3&6} & \sbmatrix{2&1\\6&0} & \sbmatrix{4&4\\3&5} & \sbmatrix{3&4\\6&6} & \begin{aligned}\tp 4 t^8 + 6 t^7 + 4 t^6 + 6 t^5 + 2 t^4 \\[-.8ex] + 6 t^3 + 4 t^2 + 6 t + 4 \bp\end{aligned} & t^2 + 5 t + 1 \\[0ex]
  \sbmatrix{0&4\\5&2} & \sbmatrix{0&4\\5&2} & \sbmatrix{0&4\\5&2} & \sbmatrix{0&4\\5&2} & \sbmatrix{0&4\\5&2} & \begin{aligned}\tp t^8 + 2 t^7 + 2 t^6 + 3 t^5 + 6 t^4 \\[-.8ex] + 3 t^3 + 2 t^2 + 2 t + 1 \bp\end{aligned} & t^2 + 5 t + 1 \\[0ex]
  \sbmatrix{0&6\\1&4} & \sbmatrix{1&5\\6&3} & \sbmatrix{4&4\\5&0} & \sbmatrix{1&2\\1&3} & \sbmatrix{4&3\\2&0} & \begin{aligned}\tp 6 t^6 + 4 t^4 + t^3 + 4 t^2 + 6 \bp\end{aligned} & t^2 + 3 t + 1 \\[0ex]
  \sbmatrix{5&0\\0&3} & \sbmatrix{5&0\\1&3} & \sbmatrix{3&0\\6&5} & \sbmatrix{5&0\\0&3} & \sbmatrix{6&4\\1&2} & \begin{aligned}\tp 0 \bp\end{aligned} & t^2 + 6 t + 1 \\[0ex]
  \sbmatrix{4&0\\0&2} & \sbmatrix{4&0\\0&2} & \sbmatrix{4&0\\0&2} & \sbmatrix{4&0\\0&2} & \sbmatrix{4&0\\0&2} & \begin{aligned}\tp t^8 + 6 t^7 + 4 t^6 + 5 t^5 + 5 t^3 \\[-.8ex] + 4 t^2 + 6 t + 1 \bp\end{aligned} & t^2 + t + 1 \\[0ex]
  \sbmatrix{0&4\\5&2} & \sbmatrix{4&2\\6&5} & \sbmatrix{1&0\\3&1} & \sbmatrix{0&2\\3&2} & \sbmatrix{0&2\\3&2} & \begin{aligned}\tp t^4 + 3 t^3 + 6 t^2 + 3 t + 1 \bp\end{aligned} & t^2 + 5 t + 1 \\[0ex]
  \sbmatrix{5&0\\0&3} & \sbmatrix{5&0\\0&3} & \sbmatrix{3&0\\2&5} & \sbmatrix{2&4\\1&6} & \sbmatrix{3&2\\0&5} & \begin{aligned}\tp t^8 + 3 t^7 + 3 t^6 + 5 t^5 + 5 t^4 \\[-.8ex] + 5 t^3 + 3 t^2 + 3 t + 1 \bp\end{aligned} & t^2 + 6 t + 1 \\[0ex]
  \sbmatrix{4&0\\0&2} & \sbmatrix{5&2\\2&1} & \sbmatrix{5&2\\2&1} & \sbmatrix{5&1\\4&1} & \sbmatrix{5&2\\2&1} & \begin{aligned}\tp 4 t^8 + 4 t^6 + t^5 + 2 t^4 + t^3 \\[-.8ex] + 4 t^2 + 4 \bp\end{aligned} & t^2 + t + 1 \\[0ex]
  \sbmatrix{5&0\\0&3} & \sbmatrix{6&5\\5&2} & \sbmatrix{4&4\\2&4} & \sbmatrix{0&2\\3&1} & \sbmatrix{5&3\\0&3} & \begin{aligned}\tp 4 t^8 + t^7 + 4 t^6 + t^5 + 2 t^4 \\[-.8ex] + t^3 + 4 t^2 + t + 4 \bp\end{aligned} & t^2 + 6 t + 1 \\[0ex]
  \sbmatrix{6&0\\0&6} & \sbmatrix{6&0\\0&6} & \sbmatrix{6&0\\0&6} & \sbmatrix{6&0\\0&6} & \sbmatrix{6&0\\0&6} & \begin{aligned}\tp t^8 + 5 t^7 + 2 t^6 + 4 t^5 + 6 t^4 \\[-.8ex] + 4 t^3 + 2 t^2 + 5 t + 1 \bp\end{aligned} & t^2 + 2 t + 1 \\[0ex]
  \sbmatrix{0&6\\1&3} & \sbmatrix{1&2\\4&2} & \sbmatrix{1&1\\1&2} & \sbmatrix{6&6\\5&4} & \sbmatrix{1&4\\2&2} & \begin{aligned}\tp t^6 + 3 t^4 + t^3 + 3 t^2 + 1 \bp\end{aligned} & t^2 + 4 t + 1 \\[0ex]
  \sbmatrix{0&4\\5&5} & \sbmatrix{3&2\\6&2} & \sbmatrix{4&2\\5&1} & \sbmatrix{0&1\\6&5} & \sbmatrix{0&1\\6&5} & \begin{aligned}\tp t^4 + 4 t^3 + 6 t^2 + 4 t + 1 \bp\end{aligned} & t^2 + 2 t + 1 \\[0ex]
  \sbmatrix{0&4\\5&5} & \sbmatrix{6&2\\0&6} & \sbmatrix{6&1\\0&6} & \sbmatrix{2&2\\6&3} & \sbmatrix{5&2\\3&0} & \begin{aligned}\tp t^8 + 3 t^6 + 6 t^4 + 3 t^2 + 1 \bp\end{aligned} & t^2 + 2 t + 1 \\[0ex]
  \sbmatrix{4&0\\0&2} & \sbmatrix{5&2\\2&1} & \sbmatrix{4&0\\0&2} & \sbmatrix{5&1\\4&1} & \sbmatrix{5&4\\1&1} & \begin{aligned}\tp 2 t^4 + 4 t^3 + 6 t^2 + 4 t + 2 \bp\end{aligned} & t^2 + t + 1 \\[0ex]
  \sbmatrix{0&4\\5&5} & \sbmatrix{0&4\\5&5} & \sbmatrix{4&2\\5&1} & \sbmatrix{5&4\\5&0} & \sbmatrix{6&4\\0&6} & \begin{aligned}\tp t^8 + 6 t^7 + 3 t^6 + 2 t^5 + t^4 \\[-.8ex] + 2 t^3 + 3 t^2 + 6 t + 1 \bp\end{aligned} & t^2 + 2 t + 1 \\[0ex]
  \sbmatrix{0&6\\1&4} & \sbmatrix{5&6\\6&6} & \sbmatrix{1&5\\6&3} & \sbmatrix{0&4\\5&4} & \sbmatrix{4&4\\5&0} & \begin{aligned}\tp t^8 + 3 t^7 + 6 t^6 + 6 t^5 + 4 t^4 \\[-.8ex] + 6 t^3 + 6 t^2 + 3 t + 1 \bp\end{aligned} & t^2 + 3 t + 1 \\[0ex]
  \sbmatrix{4&0\\0&2} & \sbmatrix{6&6\\1&0} & \sbmatrix{2&0\\0&4} & \sbmatrix{0&5\\4&6} & \sbmatrix{4&0\\1&2} & \begin{aligned}\tp 2 t^8 + t^7 + 6 t^6 + 4 t^5 + 3 t^4 \\[-.8ex] + 4 t^3 + 6 t^2 + t + 2 \bp\end{aligned} & t^2 + t + 1 \\[0ex]
  \sbmatrix{0&6\\1&3} & \sbmatrix{2&1\\1&1} & \sbmatrix{1&3\\5&2} & \sbmatrix{3&5\\4&0} & \sbmatrix{5&4\\6&5} & \begin{aligned}\tp t^8 + 4 t^7 + 6 t^6 + t^5 + 4 t^4 \\[-.8ex] + t^3 + 6 t^2 + 4 t + 1 \bp\end{aligned} & t^2 + 4 t + 1 \\[0ex]
  \sbmatrix{0&6\\1&3} & \sbmatrix{0&6\\1&3} & \sbmatrix{2&1\\1&1} & \sbmatrix{2&5\\3&1} & \sbmatrix{5&1\\3&5} & \begin{aligned}\tp t^8 + 2 t^4 + 1 \bp\end{aligned} & t^2 + 4 t + 1 \\[0ex]
  \sbmatrix{0&4\\5&2} & \sbmatrix{5&1\\5&4} & \sbmatrix{5&4\\3&4} & \sbmatrix{1&1\\0&1} & \sbmatrix{1&4\\0&1} & \begin{aligned}\tp 4 t^8 + 2 t^7 + 3 t^6 + 4 t^5 + 2 t^4 \\[-.8ex] + 4 t^3 + 3 t^2 + 2 t + 4 \bp\end{aligned} & t^2 + 5 t + 1 \\[0ex]
  \sbmatrix{5&0\\0&3} & \sbmatrix{0&6\\1&1} & \sbmatrix{5&0\\2&3} & \sbmatrix{5&0\\4&3} & \sbmatrix{0&4\\5&1} & \begin{aligned}\tp 4 t^8 + 6 t^7 + 6 t^6 + 4 t^5 + 4 t^4 \\[-.8ex] + 4 t^3 + 6 t^2 + 6 t + 4 \bp\end{aligned} & t^2 + 6 t + 1 \\[0ex]
  \sbmatrix{0&6\\1&3} & \sbmatrix{5&1\\3&5} & \sbmatrix{6&2\\1&4} & \sbmatrix{4&6\\5&6} & \sbmatrix{3&3\\2&0} & \begin{aligned}\tp 2 t^8 + 5 t^7 + 3 t^6 + 2 t^5 + 6 t^4 \\[-.8ex] + 2 t^3 + 3 t^2 + 5 t + 2 \bp\end{aligned} & t^2 + 4 t + 1 \\[0ex]
  \sbmatrix{4&0\\0&2} & \sbmatrix{5&2\\2&1} & \sbmatrix{5&1\\4&1} & \sbmatrix{5&1\\4&1} & \sbmatrix{4&0\\0&2} & \begin{aligned}\tp 2 t^8 + 4 t^6 + 6 t^4 + 4 t^2 + 2 \bp\end{aligned} & t^2 + t + 1 \\[0ex]
  \sbmatrix{5&0\\0&3} & \sbmatrix{6&5\\5&2} & \sbmatrix{6&5\\5&2} & \sbmatrix{6&3\\6&2} & \sbmatrix{6&5\\5&2} & \begin{aligned}\tp 4 t^8 + 4 t^6 + 6 t^5 + 2 t^4 + 6 t^3 \\[-.8ex] + 4 t^2 + 4 \bp\end{aligned} & t^2 + 6 t + 1 \\[0ex]
  \sbmatrix{0&6\\1&0} & \sbmatrix{2&6\\5&5} & \sbmatrix{4&6\\3&3} & \sbmatrix{4&6\\3&3} & \sbmatrix{0&1\\6&0} & \begin{aligned}\tp 4 t^4 + t^2 + 4 \bp\end{aligned} & t^2 + 1 \\[0ex]
  \sbmatrix{0&4\\5&2} & \sbmatrix{1&0\\3&1} & \sbmatrix{0&2\\3&2} & \sbmatrix{1&2\\0&1} & \sbmatrix{3&4\\6&6} & \begin{aligned}\tp 4 t^8 + 5 t^6 + 3 t^4 + 5 t^2 + 4 \bp\end{aligned} & t^2 + 5 t + 1 \\[0ex]
  \sbmatrix{4&0\\0&2} & \sbmatrix{5&2\\2&1} & \sbmatrix{3&5\\3&3} & \sbmatrix{6&4\\5&0} & \sbmatrix{4&0\\4&2} & \begin{aligned}\tp 4 t^8 + 6 t^7 + 4 t^6 + 6 t^5 + 2 t^4 \\[-.8ex] + 6 t^3 + 4 t^2 + 6 t + 4 \bp\end{aligned} & t^2 + t + 1 \\[0ex]
  \sbmatrix{0&6\\1&0} & \sbmatrix{2&6\\5&5} & \sbmatrix{3&5\\5&4} & \sbmatrix{1&4\\3&6} & \sbmatrix{5&3\\3&2} & \begin{aligned}\tp 6 t^8 + 5 t^7 + t^6 + t^5 + 4 t^4 \\[-.8ex] + t^3 + t^2 + 5 t + 6 \bp\end{aligned} & t^2 + 1 \\[0ex]
  \sbmatrix{5&0\\0&3} & \sbmatrix{5&0\\0&3} & \sbmatrix{5&0\\0&3} & \sbmatrix{5&0\\0&3} & \sbmatrix{5&0\\0&3} & \begin{aligned}\tp t^8 + t^7 + 4 t^6 + 2 t^5 + 2 t^3 \\[-.8ex] + 4 t^2 + t + 1 \bp\end{aligned} & t^2 + 6 t + 1 \\[0ex]
  \sbmatrix{0&4\\5&2} & \sbmatrix{1&0\\3&1} & \sbmatrix{5&1\\5&4} & \sbmatrix{1&0\\6&1} & \sbmatrix{2&2\\3&0} & \begin{aligned}\tp 4 t^4 + 5 t^3 + 3 t^2 + 5 t + 4 \bp\end{aligned} & t^2 + 5 t + 1 \\[0ex]
  \sbmatrix{4&0\\0&2} & \sbmatrix{6&6\\1&0} & \sbmatrix{4&5\\0&2} & \sbmatrix{4&3\\0&2} & \sbmatrix{6&2\\3&0} & \begin{aligned}\tp 4 t^8 + t^7 + 6 t^6 + 3 t^5 + 4 t^4 \\[-.8ex] + 3 t^3 + 6 t^2 + t + 4 \bp\end{aligned} & t^2 + t + 1 \\[0ex]
  \sbmatrix{0&6\\1&4} & \sbmatrix{0&6\\1&4} & \sbmatrix{5&6\\6&6} & \sbmatrix{4&6\\1&0} & \sbmatrix{1&6\\5&3} & \begin{aligned}\tp t^8 + 2 t^4 + 1 \bp\end{aligned} & t^2 + 3 t + 1 \\[0ex]
  \sbmatrix{5&0\\0&3} & \sbmatrix{6&5\\5&2} & \sbmatrix{5&0\\0&3} & \sbmatrix{6&3\\6&2} & \sbmatrix{6&6\\3&2} & \begin{aligned}\tp 2 t^4 + 3 t^3 + 6 t^2 + 3 t + 2 \bp\end{aligned} & t^2 + 6 t + 1 \\[0ex]
  \sbmatrix{5&0\\0&3} & \sbmatrix{0&6\\1&1} & \sbmatrix{3&0\\0&5} & \sbmatrix{1&3\\2&0} & \sbmatrix{5&6\\0&3} & \begin{aligned}\tp 2 t^8 + 6 t^7 + 6 t^6 + 3 t^5 + 3 t^4 \\[-.8ex] + 3 t^3 + 6 t^2 + 6 t + 2 \bp\end{aligned} & t^2 + 6 t + 1 \\[0ex]
  \sbmatrix{5&0\\0&3} & \sbmatrix{5&0\\1&3} & \sbmatrix{3&1\\0&5} & \sbmatrix{1&4\\5&0} & \sbmatrix{3&3\\0&5} & \begin{aligned}\tp t^8 + 2 t^7 + t^6 + 2 t^5 + 4 t^4 \\[-.8ex] + 2 t^3 + t^2 + 2 t + 1 \bp\end{aligned} & t^2 + 6 t + 1 \\[0ex]
  \sbmatrix{0&4\\5&5} & \sbmatrix{6&0\\5&6} & \sbmatrix{1&4\\6&4} & \sbmatrix{4&4\\6&1} & \sbmatrix{6&1\\0&6} & \begin{aligned}\tp 4 t^8 + t^7 + 4 t^6 + t^5 + 2 t^4 \\[-.8ex] + t^3 + 4 t^2 + t + 4 \bp\end{aligned} & t^2 + 2 t + 1 \\[0ex]
  \sbmatrix{0&6\\1&3} & \sbmatrix{2&4\\2&1} & \sbmatrix{0&5\\4&3} & \sbmatrix{1&4\\2&2} & \sbmatrix{1&2\\4&2} & \begin{aligned}\tp 2 t^8 + 6 t^7 + 4 t^6 + t^4 + 4 t^2 \\[-.8ex] + 6 t + 2 \bp\end{aligned} & t^2 + 4 t + 1 \\[0ex]
  \sbmatrix{0&6\\1&4} & \sbmatrix{0&6\\1&4} & \sbmatrix{0&6\\1&4} & \sbmatrix{0&6\\1&4} & \sbmatrix{0&6\\1&4} & \begin{aligned}\tp t^8 + 4 t^7 + t^6 + 5 t^5 + 5 t^3 \\[-.8ex] + t^2 + 4 t + 1 \bp\end{aligned} & t^2 + 3 t + 1 \\[0ex]
  \sbmatrix{4&0\\0&2} & \sbmatrix{6&6\\1&0} & \sbmatrix{5&5\\5&1} & \sbmatrix{3&3\\5&3} & \sbmatrix{4&0\\4&2} & \begin{aligned}\tp 2 t^8 + 6 t^7 + 4 t^6 + t^4 + 4 t^2 \\[-.8ex] + 6 t + 2 \bp\end{aligned} & t^2 + t + 1 \\[0ex]
  \sbmatrix{5&0\\0&3} & \sbmatrix{0&6\\1&1} & \sbmatrix{6&2\\2&2} & \sbmatrix{4&2\\4&4} & \sbmatrix{5&3\\0&3} & \begin{aligned}\tp 2 t^8 + t^7 + 4 t^6 + t^4 + 4 t^2 \\[-.8ex] + t + 2 \bp\end{aligned} & t^2 + 6 t + 1 \\[0ex]
  \sbmatrix{5&0\\0&3} & \sbmatrix{6&5\\5&2} & \sbmatrix{6&3\\6&2} & \sbmatrix{6&3\\6&2} & \sbmatrix{5&0\\0&3} & \begin{aligned}\tp 2 t^8 + 4 t^6 + 6 t^4 + 4 t^2 + 2 \bp\end{aligned} & t^2 + 6 t + 1 \\[0ex]
  \sbmatrix{4&0\\0&2} & \sbmatrix{4&3\\0&2} & \sbmatrix{2&4\\0&4} & \sbmatrix{4&0\\0&2} & \sbmatrix{5&3\\6&1} & \begin{aligned}\tp 0 \bp\end{aligned} & t^2 + t + 1 \\[0ex]
  \sbmatrix{0&6\\1&4} & \sbmatrix{2&4\\6&2} & \sbmatrix{1&6\\5&3} & \sbmatrix{1&3\\3&3} & \sbmatrix{6&4\\2&5} & \begin{aligned}\tp 2 t^8 + 2 t^7 + 3 t^6 + 5 t^5 + 6 t^4 \\[-.8ex] + 5 t^3 + 3 t^2 + 2 t + 2 \bp\end{aligned} & t^2 + 3 t + 1 \\[0ex]
  \sbmatrix{0&4\\5&5} & \sbmatrix{4&2\\5&1} & \sbmatrix{0&1\\6&5} & \sbmatrix{5&4\\5&0} & \sbmatrix{6&1\\0&6} & \begin{aligned}\tp 4 t^8 + 5 t^6 + 3 t^4 + 5 t^2 + 4 \bp\end{aligned} & t^2 + 2 t + 1 \\[0ex]
  \sbmatrix{0&6\\1&0} & \sbmatrix{2&6\\5&5} & \sbmatrix{5&3\\3&2} & \sbmatrix{0&5\\4&0} & \sbmatrix{3&5\\5&4} & \begin{aligned}\tp 6 t^8 + 2 t^7 + t^6 + 6 t^5 + 4 t^4 \\[-.8ex] + 6 t^3 + t^2 + 2 t + 6 \bp\end{aligned} & t^2 + 1 \\[0ex]
  \sbmatrix{0&6\\1&3} & \sbmatrix{0&6\\1&3} & \sbmatrix{0&6\\1&3} & \sbmatrix{0&6\\1&3} & \sbmatrix{0&6\\1&3} & \begin{aligned}\tp t^8 + 3 t^7 + t^6 + 2 t^5 + 2 t^3 \\[-.8ex] + t^2 + 3 t + 1 \bp\end{aligned} & t^2 + 4 t + 1 \\[0ex]
\end{longtable}
}

\bibliographystyle{amsalpha}
\renewcommand{\MR}[1]{}
\bibliography{research}

\end{document}